\documentclass[11pt,a4paper]{article}

\usepackage{inputenc}
\usepackage{amsmath}
\usepackage{bm}
\usepackage{bbold}
\usepackage{amsthm}
\usepackage{enumerate}

\usepackage{hyperref}

\setlength{\mathsurround}{1pt}

\title{Algebraic solution of tropical optimization problems via matrix sparsification with application to scheduling}

\author{N. Krivulin\thanks{Faculty of Mathematics and Mechanics, Saint Petersburg State University, 28 Universitetsky Ave., St.~Petersburg, 198504, Russia, 
nkk@math.spbu.ru.}
}

\date{}

\newtheorem{theorem}{Theorem}
\newtheorem{lemma}[theorem]{Lemma}
\newtheorem{corollary}[theorem]{Corollary}

\theoremstyle{definition}
\newtheorem{example}{Example}

\setlength{\unitlength}{1mm}

%\sloppy

\begin{document}

\maketitle

\begin{abstract}
Optimization problems are considered in the framework of tropical algebra to minimize and maximize a nonlinear objective function defined on vectors over an idempotent semifield, and calculated using multiplicative conjugate transposition. To find the minimum of the function, we first obtain a partial solution, which explicitly represents a subset of solution vectors. We characterize all solutions by a system of simultaneous equation and inequality, and show that the solution set is closed under vector addition and scalar multiplication. A matrix sparsification technique is proposed to extend the partial solution, and then to obtain a complete solution described as a family of subsets. We offer a backtracking procedure that generates all members of the family, and derive an explicit representation for the complete solution. As another result, we deduce a complete solution of the maximization problem, given in a compact vector form by the use of sparsified matrices. The results obtained are illustrated with illuminating examples and graphical representations. We apply the results to solve real-world problems drawn from project (machine) scheduling, and give numerical examples.
\\

\textbf{Key-Words:} tropical algebra, idempotent semifield, optimization problem, sparse matrix, backtracking, just-in-time scheduling.
\\

\textbf{MSC (2010):} 65K10, 15A80, 65F50, 90B35, 90C48
\end{abstract}

\section{Introduction}

Tropical (idempotent) mathematics focuses on the theory and applications of semirings with idempotent addition, and had its origin in the seminal works published in the 1960s by Pandit \cite{Pandit1961Anew}, Cuninghame-Green \cite{Cuninghamegreen1962Describing}, Giffler \cite{Giffler1963Scheduling}, Hoffman \cite{Hoffman1963Onabstract}, Vorob{'}ev \cite{Vorobjev1963Theextremal}, Romanovski{\u\i} \cite{Romanovskii1964Asymptotic}, Korbut \cite{Korbut1965Extremal}, and Peteanu \cite{Peteanu1967Analgebra}. An extensive study of tropical mathematics was motivated by real-world problems in various areas of operations research and computer science, including path analysis in graphs and networks \cite{Pandit1961Anew,Peteanu1967Analgebra}, machine scheduling \cite{Cuninghamegreen1962Describing,Giffler1963Scheduling}, production planning and control \cite{Romanovskii1964Asymptotic,Vorobjev1963Theextremal}. The significant progress in the field over the past few decades is reported in several research monographs, such as ones by Baccelli at al. \cite{Baccelli1993Synchronization}, Kolokoltsov and Maslov \cite{Kolokoltsov1997Idempotent}, Golan \cite{Golan2003Semirings}, Heidergott et al. \cite{Heidergott2006Maxplus}, McEneaney \cite{Mceneaney2006Maxplus}, Gondran and Minoux \cite{Gondran2008Graphs}, Butkovi\v{c} \cite{Butkovic2010Maxlinear}, Maclagan and Sturmfels \cite{Maclagan2015Introduction} as well as in a wide range of contributed papers.

Since the early studies \cite{Giffler1963Scheduling,Hoffman1963Onabstract,Peteanu1967Analgebra,Romanovskii1964Asymptotic}, optimization problems that can be examined in the framework of tropical mathematics have formed a notable research domain in the field. These problems are formulated to minimize or maximize functions defined on vectors over idempotent semifields (semirings with multiplicative inverses), and may involve constraints in the form of vector equations and inequalities. The objective functions can be both linear and nonlinear in the tropical mathematics setting. 

The span (range) vector seminorm, which is defined as the maximum deviation between components of a vector, presents one of the objective functions encountered in practice. Specifically, this seminorm can serve as the optimization criterion for just-in-time scheduling (see, e.g., Demeulemeester and Herroelen \cite{Demeulemeester2002Project}, Neumann et al. \cite{Neumann2003Project}, T{\textquoteright}kindt and Billaut \cite{Tkindt2006Multicriteria} and Vanhoucke \cite{Vanhoucke2012Project}), and finds applications in real-world problems that involve time synchronization in manufacturing, transportation networks, and parallel data processing.

In the context of tropical mathematics, the span seminorm has been introduced by Cuninghame-Green \cite{Cuninghamegreen1979Minimax}, and Cuninghame-Green and Butkovi\v{c} \cite{CuninghameGreen2004Bases}. The seminorm was used by Butkovi\v{c} and Tam \cite{Butkovic2009Onsome} and Tam \cite{Tam2010Optimizing} in optimization problems drawn from machine scheduling. A manufacturing system was considered, in which machines start and finish under some precedence constraints to make components for final products. The problems were to find the starting time for each machine to provide the completion times that are spread over either the shortest or longest time intervals. Solutions were given within a combined framework that involves two reciprocally dual idempotent semifields. Similar problems in the general setting of tropical mathematics were examined by Krivulin in \cite{Krivulin2013Explicit,Krivulin2016Amaximization,Krivulin2017Tropical}, where direct, explicit solutions were suggested. However, the results obtained present a partial solution, rather than a complete solution, or offer a solution in scalar terms, rather than in a compact vector form.

We consider the tropical optimization problems formulated in \cite{Krivulin2013Explicit,Krivulin2016Amaximization,Krivulin2017Tropical} as extensions of the problems of minimizing and maximizing the span seminorm, and represent them in a slightly different form to 
\begin{equation*}
\begin{aligned}
&
\text{minimize (maximize)}
&&
\bm{q}^{-}\bm{x}(\bm{A}\bm{x})^{-}\bm{p},
\end{aligned}
%\label{P-minqxAxp}
\end{equation*}
where $\bm{p}$ and $\bm{q}$ are given vectors, $\bm{A}$ is a given matrix, $\bm{x}$ is the unknown vector. The minus sign in the superscript indicates multiplicative conjugate transposition of vectors, and the matrix-vector multiplications are thought of in the sense of tropical algebra.

The purpose of this paper is twofold. First, to obtain complete solutions to both minimization and maximization problems in an explicit vector form. We extend the partial solution of the minimization problem, which is obtained in \cite{Krivulin2013Explicit} in the form of a subset of solution vectors, to a complete solution, describing all vectors that solve the problem. We combine the approach developed in \cite{Krivulin2012Asolution,Krivulin2013Explicit,Krivulin2015Extremal,Krivulin2015Amultidimensional,Krivulin2006Solution} to reduce the problem to a system of simultaneous equation and inequality, with a new matrix sparsification technique to describe all solutions to the system. We use sparsified matrices to transform the complete solution of the maximization problem given in \cite{Krivulin2016Amaximization} into a compact vector form as well. 

The second purpose is to apply the above results to the solution of real-world problems taken from just-in-time and scarce resource scheduling. We consider a project that involves a set of activities operating in parallel under temporal constraints imposed on the start and finish times of activities in the form of start-start, start-finish and finish-start precedence relations, and the finish deadline time boundaries. The problems are to minimize or maximize the maximum deviation of the finish times of activities, subject to the given constraints. These scheduling objectives reflect various possible resource limitations, such as manpower, energy and location constraints, which can require that all activities be finished simultaneously, or conversely, that the finish times be spread over the longest time interval.

We use results in \cite{Krivulin2013Explicit,Krivulin2014Aconstrained,Krivulin2015Extremal,Krivulin2015Amultidimensional}, which enable to represent a range of scheduling problems as optimization problems in terms of tropical algebra, and then to obtain direct closed-form solutions to the problems on the basis of methods of tropical optimization. Note that existing solutions to the problems of interest generally present iterative algorithms that produce a solution if any exists, or indicate that there are no solutions (see, e.g., \cite{Demeulemeester2002Project,Neumann2003Project,Tkindt2006Multicriteria,Vanhoucke2012Project} for further details and comprehensive reviews). Moreover, many problems can be expressed as linear and mixed-integer linear programs, and then solved using an appropriate computational scheme of (mixed-) linear programming, which, in general, does not guarantee a direct solution in a closed form. 

This paper further extends and supplements the results presented in the conference paper \cite{Krivulin2015Soving}, which examined only the minimization problem, and focused on theoretical aspects of tropical optimization, rather than on applications of the results. Specifically, the current paper offers a new complete solution, obtained in a compact vector form by the use of sparsified matrices, to the maximization problem under study as well. In addition to the theoretical results, the paper describes, in detail, the application of the results to solve scheduling problems, and gives illuminating examples.

The solutions obtained for the scheduling problems under both minimization and maximization of the maximum deviation of finish times present quite new results. For instance, we derive a complete solution of the scheduling problem under the minimization criterion, which significantly extends previously known partial solutions \cite{Krivulin2013Explicit,Krivulin2017Tropical}. The maximization problem under study generalizes those considered in \cite{Krivulin2016Amaximization} by taking into consideration additional constraints. Moreover, we offer a solution to the problem, which, in contrast to the scalar representation in \cite{Krivulin2016Amaximization}, is given in a compact vector form, ready for further analysis and practical use. 

We start with a brief overview of basic definitions, notation, and preliminary results of tropical mathematics in Section~\ref{S-PR} to provide a general framework for the solutions in the later sections. Specifically, a lemma that offers two equivalent representations for a vector set is presented, which is of independent interest. Section~\ref{S-TOP} presents formulations for both minimization and maximization problems under consideration.

To solve the minimization problem in Section~\ref{S-SMinP}, we first find the minimum in the problem, and offer a partial solution in the form of an explicit representation of a subset of solution vectors. We characterize all solutions to the problem by a system of simultaneous equation and inequality, and exploit this characterization to investigate properties of the solutions. Furthermore, we develop a matrix sparsification technique, which consists in dropping entries below a prescribed threshold in the matrix of the problem without affecting the solution. By combining this technique with the above characterization, the partial solution obtained is extended to a wider solution subset, which includes the partial solution as a special case. We describe all solutions of the problem as a family of subsets, and propose a backtracking procedure that allows one to generate all members in the family. The section concludes with the main result, which offers an explicit representation for the complete solution in a compact vector form.

In Section~\ref{S-SMaxP}, we apply the above representation lemma to describe a complete solution of the maximization problem in a compact vector form using sparsified matrices. Numerical examples and graphical illustrations are included in this and previous sections to provide additional insights into the results obtained. Finally, in Section~\ref{S-ASP}, we solve real-world problems drawn from scheduling. We start with a standard description of the scheduling problems, and then represent them in terms of tropical algebra. We use the previous results to obtain complete solutions of the problems, and then give examples to illustrate the solution.

\section{Preliminary results}
\label{S-PR}

In this section, we give a brief overview of the main definitions, notation, and preliminary results used in the subsequent solution to handle the tropical optimization problems under study. Concise introductions to and thorough discussion of tropical mathematics are presented in various forms in a range of works, including \cite{Akian2007Maxplus,Baccelli1993Synchronization,Butkovic2010Maxlinear,Golan2003Semirings,Gondran2008Graphs,Heidergott2006Maxplus,Kolokoltsov1997Idempotent,Litvinov2007Themaslov,Maclagan2015Introduction,Mceneaney2006Maxplus}. In the overview below, we mainly follow the results in \cite{Krivulin2014Aconstrained,Krivulin2015Extremal,Krivulin2015Amultidimensional}, which offer a unified framework to obtain explicit solutions in a compact form. For further details, one can consult the publications listed before.

\subsection{Idempotent semifield}

Let $\mathbb{X}$ be a nonempty set that is closed under two associative and commutative operations, addition $\oplus$ and multiplication $\otimes$, which have their neutral elements, zero $\mathbb{0}$ and identity $\mathbb{1}$. Addition is idempotent to yield $x\oplus x=x$ for all $x\in\mathbb{X}$. Multiplication is invertible, which implies that each nonzero $x\in\mathbb{X}$ has an inverse $x^{-1}$ to satisfy the equality $x\otimes x^{-1}=\mathbb{1}$. Moreover, multiplication distributes over addition, and has $\mathbb{0}$ as the absorbing element. Under these conditions, the system $\langle\mathbb{X},\mathbb{0},\mathbb{1},\oplus,\otimes\rangle$ is commonly referred to as the idempotent semifield.

The idempotent addition produces a partial order, by which $x\leq y$ if and only if $x\oplus y=y$. With respect to this order, the inequality $x\oplus y\leq z$ is equivalent to two inequalities $x\leq z$ and $y\leq z$. Moreover, addition and multiplication are isotone in each argument, whereas the multiplicative inversion is antitone.

The partial order is assumed to extend to a consistent total order over $\mathbb{X}$.

The power notation with integer exponents is used for iterated multiplication to define $x^{0}=\mathbb{1}$, $x^{p}=x\otimes x^{p-1}$, $x^{-p}=(x^{-1})^{p}$ for any nonzero $x$ and positive integer $p$. In what follows, the multiplication sign $\otimes$ is dropped for simplicity. The relation symbols and the optimization problems are thought of in terms of the above order, which is induced by idempotent addition.

As examples of the general semifield under consideration, one can take
$$
\begin{aligned}
\mathbb{R}_{\max,+}
&=
\langle\mathbb{R}\cup\{-\infty\},-\infty,0,\max,+\rangle,
%&
\\
\mathbb{R}_{\min,+}
&=
\langle\mathbb{R}\cup\{+\infty\},+\infty,0,\min,+\rangle,
\\
\mathbb{R}_{\max,\times}
&=
\langle\mathbb{R}_{+}\cup\{0\},0,1,\max,\times\rangle,
%&
\\
\mathbb{R}_{\min,\times}
&=
\langle\mathbb{R}_{+}\cup\{+\infty\},+\infty,1,\min,\times\rangle,
\end{aligned}
$$
where $\mathbb{R}$ is the set of real numbers, and $\mathbb{R}_{+}=\{x\in\mathbb{R}|x>0\}$. 

Specifically, the semifield $\mathbb{R}_{\max,+}$ has addition $\oplus$ given by the maximum, and multiplication $\otimes$ by the ordinary addition, with the null $\mathbb{0}=-\infty$ and identity $\mathbb{1}=0$. Each $x\in\mathbb{R}$ has its inverse $x^{-1}$ equal to $-x$ in standard notation. The power $x^{y}$ is defined for any $x,y\in\mathbb{R}$ and coincides with the arithmetic product $xy$. The order induced by addition corresponds to the natural linear order on $\mathbb{R}$.

\subsection{Matrix and vector algebra}

We now consider matrices over $\mathbb{X}$ and denote the set of matrices with $m$ rows and $n$ columns by $\mathbb{X}^{m\times n}$. A matrix with all entries equal to $\mathbb{0}$ is called the zero matrix. A matrix is row- (column-) regular, if it has no zero rows (columns).

For any matrices $\bm{A}=(a_{ij})$, $\bm{B}=(b_{ij})$, and $\bm{C}=(c_{ij})$ of appropriate size, and a scalar $x$, matrix addition, matrix and scalar multiplications are routinely defined entry-wise by the formulae
$$
\{\bm{A}\oplus\bm{B}\}_{ij}
=
a_{ij}\oplus b_{ij},
\quad
\{\bm{B}\bm{C}\}_{ij}
=
\bigoplus_{k}b_{ik}c_{kj},
\quad
\{x\bm{A}\}_{ij}
=
xa_{ij}.
$$

The partial order induced on $\mathbb{X}$ by idempotent addition, and its properties are extended to matrices, where the relations are considered entry-wise.

For any matrix $\bm{A}\in\mathbb{X}^{m\times n}$, its transpose is the matrix $\bm{A}^{T}\in\mathbb{X}^{n\times m}$.

For a nonzero matrix $\bm{A}=(a_{ij})\in\mathbb{X}^{m\times n}$, the multiplicative conjugate transpose is the matrix $\bm{A}^{-}=(a_{ij}^{-})\in\mathbb{X}^{n\times m}$ with the elements $a_{ij}^{-}=a_{ji}^{-1}$ if $a_{ji}\ne\mathbb{0}$, and $a_{ij}^{-}=\mathbb{0}$ otherwise. 

Consider square matrices in the set $\mathbb{X}^{n\times n}$. A matrix that has the diagonal entries equal to $\mathbb{1}$, and the off-diagonal entries to $\mathbb{0}$, is the identity matrix denoted by $\bm{I}$. Nonnegative powers represent repeated matrix multiplication as $\bm{A}^{0}=\bm{I}$ and $\bm{A}^{p}=\bm{A}\bm{A}^{p-1}$ for any nonzero matrix $\bm{A}$ and integer $p\geq1$.

The trace of a square matrix $\bm{A}\in\mathbb{X}^{n\times n}$ is defined by
$$
\mathop\mathrm{tr}\bm{A}
=
\bigoplus_{i=1}^{n}a_{ii}.
$$ 

Suppose that a square matrix $\bm{A}$ is row-regular. Clearly, the inequality $\bm{A}\bm{A}^{-}\geq\bm{I}$ is then valid. Moreover, if the row-regular matrix $\bm{A}$ has exactly one nonzero entry in every row, then the inequality $\bm{A}^{-}\bm{A}\leq\bm{I}$ holds as well. 

The matrices with only one column (row) are routinely referred to as the column (row) vectors. Unless otherwise indicated, the vectors are considered below as column vectors. The set of column vectors of order $n$ is denoted by $\mathbb{X}^{n}$.

A vector that has all components equal to $\mathbb{0}$ is the zero vector denoted $\bm{0}$. If a vector has no zero components, it is called regular. 

For any vectors $\bm{a}=(a_{i})$ and $\bm{b}=(b_{i})$ of the same order, and a scalar $x$, addition and scalar multiplication are performed component-wise by the rules
$$
\{\bm{a}\oplus\bm{b}\}_{i}
=
a_{i}\oplus b_{i},
\quad
\{x\bm{a}\}_{i}
=
xa_{i}.
$$

In the context of $\mathbb{R}_{\max,+}^{2}$, these vector operations are illustrated in the Cartesian coordinate system on the plane in Fig.~\ref{F-ASMLSV}.
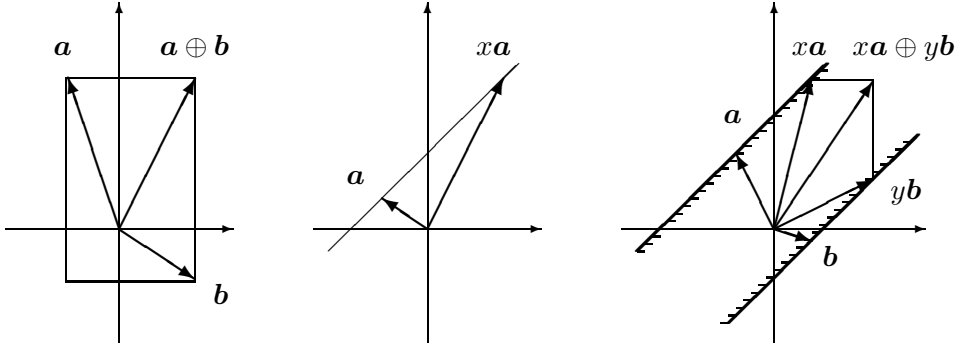
\begin{figure}[ht]
\setlength{\unitlength}{1mm}
%\begin{center}
\begin{picture}(30,45)

\put(0,15){\vector(1,0){30}}
\put(15,0){\vector(0,1){45}}

%%%%%%%

\put(15,15){\thicklines\vector(-1,3){6.7}}

\put(8,35){\line(1,0){17}}
\put(8,35){\line(0,-1){27}}

%\put(4.9,20){\thicklines\line(1,1){27}}
%\put(5.1,20){\thicklines\line(1,1){27}}
%\multiput(5,20)(1,1){27}{\line(1,0){1}}

\put(15,15){\thicklines\vector(3,-2){10}}

\put(25,8){\line(-1,0){17}}
\put(25,8){\line(0,1){27}}

%\put(16.9,2){\thicklines\line(1,1){26}}
%\put(17.1,2){\thicklines\line(1,1){26}}
%\multiput(17,2)(1,1){26}{\line(-1,0){1}}

%\put(20,25){\thicklines\vector(0,-1){20}}

%\put(20,25){\thicklines\vector(-1,0){10}}

\put(15,15){\thicklines\vector(1,2){10}}

\put(20,38){$\bm{a}\oplus\bm{b}$}

\put(6,38){$\bm{a}$}

\put(27,5){$\bm{b}$}

%%%%%%%

\end{picture}
\hspace{8\unitlength}
\begin{picture}(30,45)

\put(0,15){\vector(1,0){30}}
\put(15,0){\vector(0,1){45}}

%%%%%%%

\put(15,15){\thicklines\vector(1,2){10}}

\put(2,12){\line(1,1){25}}

\put(15,15){\thicklines\vector(-3,2){6}}

\put(21,38){$x\bm{a}$}

\put(4,21){$\bm{a}$}

%%%%%%%

\end{picture}
\hspace{8\unitlength}
\begin{picture}(40,45)

\put(0,15){\vector(1,0){40}}
\put(20,0){\vector(0,1){45}}

%%%%%%%

\put(20,15){\thicklines\vector(-1,2){5}}

\put(1.9,12){\thicklines\line(1,1){25}}
\put(2.1,12){\thicklines\line(1,1){25}}
\multiput(2,12)(1,1){25}{\line(1,0){1}}

\put(20,15){\thicklines\vector(1,4){5}}

\put(20,15){\thicklines\vector(3,-1){5}}

\put(13.9,2.5){\thicklines\line(1,1){25}}
\put(14.1,2.5){\thicklines\line(1,1){25}}
\multiput(14,2.5)(1,1){25}{\line(-1,0){1}}

\put(20,15){\thicklines\vector(2,1){13}}

\put(20,15){\thicklines\vector(2,3){13}}

\put(33,34.75){\line(-1,0){8}}
\put(33,34.75){\line(0,-1){13}}

\put(13,29){$\bm{a}$}

\put(22,38){$x\bm{a}$}

\put(26,10){$\bm{b}$}

\put(35,19){$y\bm{b}$}

\put(30,38){$x\bm{a}\oplus y\bm{b}$}

%%%%%%%

\end{picture}
%\end{center}
\caption{Addition (left), scalar multiplication (middle), and a linear span (right) of vectors in $\mathbb{R}_{\max,+}^{2}$.}
\label{F-ASMLSV}
\end{figure}

The left picture shows that, in terms of $\mathbb{R}_{\max,+}^{2}$, vector addition follows a rectangle rule. The sum of two vectors is the upper right vertex of the rectangle formed by the lines that are drawn through the ends of the vectors parallel to the coordinate axes. Scalar multiplication is given in the middle by the shift of the end point of a vector along the line at $45^{\circ}$ to the axes.

Let $\bm{x}$ be a regular vector and $\bm{A}$ be a square matrix of the same order. It is clear that the vector $\bm{A}\bm{x}$ is regular only when the matrix $\bm{A}$ is row-regular. Similarly, the row vector $\bm{x}^{T}\bm{A}$ is regular provided that $\bm{A}$ is column-regular. 

For any nonzero vector $\bm{x}=(x_{i})\in\mathbb{X}^{n}$, the multiplicative conjugate transpose is the row vector $\bm{x}^{-}=(x_{i}^{-})$, where $x_{i}^{-}=x_{i}^{-1}$ if $x_{i}\ne\mathbb{0}$, and $x_{i}^{-}=\mathbb{0}$ otherwise. The following properties of the conjugate transposition are easy to verify.

For any nonzero vectors $\bm{x}$ and $\bm{y}$, the equality $(\bm{x}\bm{y}^{-})^{-}=\bm{y}\bm{x}^{-}$ is valid. When the vectors $\bm{x}$ and $\bm{y}$ are regular and have the same size, the component-wise inequality $\bm{x}\leq\bm{y}$ implies that $\bm{x}^{-}\geq\bm{y}^{-}$ and vice versa. 

For any nonzero column vector $\bm{x}$, the equality $\bm{x}^{-}\bm{x}=\mathbb{1}$ holds (here and thereafter we identify $(1\times1)$-matrices with scalars). Moreover, if the vector $\bm{x}$ is regular, then the matrix inequality $\bm{x}\bm{x}^{-}\geq\bm{I}$ is valid as well.

\subsection{Linear dependence}

A vector $\bm{b}\in\mathbb{X}^{m}$ is linearly dependent on vectors $\bm{a}_{1},\ldots,\bm{a}_{n}\in\mathbb{X}^{m}$ if there exist scalars $x_{1},\ldots,x_{n}\in\mathbb{X}$ such that the vector $\bm{b}$ can be represented by a linear combination of these vectors as $\bm{b}=x_{1}\bm{a}_{1}\oplus\cdots\oplus x_{n}\bm{a}_{n}$. Specifically, the vector $\bm{b}$ is collinear with a vector $\bm{a}$  if $\bm{b}=x\bm{a}$ for some scalar $x$.

To describe a formal criterion for a vector $\bm{b}$ to be linearly dependent on vectors $\bm{a}_{1},\ldots,\bm{a}_{n}$, we take the latter vectors to form the matrix $\bm{A}=(\bm{a}_{1},\ldots,\bm{a}_{n})$, and then introduce a function that maps the pair $(\bm{A},\bm{b})$ to the scalar
$$
\delta(\bm{A},\bm{b})
=
(\bm{A}(\bm{b}^{-}\bm{A})^{-})^{-}\bm{b}.
$$
 
The following result was obtained in \cite{Krivulin2006Solution} (see also \cite{Krivulin2012Asolution}). 
\begin{lemma}
\label{E-AbAb1}
A vector $\bm{b}$ is linearly dependent on vectors $\bm{a}_{1},\ldots,\bm{a}_{n}$ if and only if the condition $\delta(\bm{A},\bm{b})=\mathbb{1}$ holds, where $\bm{A}=(\bm{a}_{1},\ldots,\bm{a}_{n})$.
\end{lemma}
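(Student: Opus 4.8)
The plan is to recast the linear dependence of $\bm{b}$ on $\bm{a}_{1},\ldots,\bm{a}_{n}$ as the solvability of the vector equation $\bm{A}\bm{x}=\bm{b}$, and then to show that this equation is solvable precisely when the distinguished vector $\hat{\bm{x}}=(\bm{b}^{-}\bm{A})^{-}$ already solves it. The scalar $\delta(\bm{A},\bm{b})$ will then serve as an exact indicator of whether $\bm{A}\hat{\bm{x}}$ reaches $\bm{b}$ or falls strictly short. By definition $\bm{b}$ is linearly dependent on the columns of $\bm{A}$ iff some $\bm{x}$ satisfies $\bm{A}\bm{x}=\bm{b}$, so throughout I would work with this reformulation and keep the regularity hypotheses (no zero rows or columns, $\bm{b}$ regular) needed to make every conjugate transpose well behaved.

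First I would record two auxiliary facts. (i) The vector $\hat{\bm{x}}=(\bm{b}^{-}\bm{A})^{-}$ satisfies $\bm{A}\hat{\bm{x}}\le\bm{b}$: writing $\bm{v}=\bm{b}^{-}\bm{A}$ for the resulting nonzero row vector, the row-vector analogue of $\bm{x}^{-}\bm{x}=\mathbb{1}$ gives $\bm{v}\bm{v}^{-}=\mathbb{1}$, so $\bm{b}^{-}(\bm{A}\hat{\bm{x}})=\bm{v}\bm{v}^{-}=\mathbb{1}$, and since $\bm{b}^{-}\bm{b}=\mathbb{1}$ this forces $\bm{A}\hat{\bm{x}}\le\bm{b}$ componentwise. (ii) The vector $\hat{\bm{x}}$ is the greatest subsolution: if $\bm{A}\bm{x}\le\bm{b}$, then multiplying on the left by $\bm{b}^{-}$ and using $\bm{b}^{-}\bm{b}=\mathbb{1}$ yields $\bm{v}\bm{x}\le\mathbb{1}$, which componentwise reads $x_{j}\le v_{j}^{-1}$, that is $\bm{x}\le\bm{v}^{-}=\hat{\bm{x}}$.

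Next I would chain these into the first half of the equivalence. Setting $\bm{c}=\bm{A}\hat{\bm{x}}=\bm{A}(\bm{b}^{-}\bm{A})^{-}$, fact (i) gives $\bm{c}\le\bm{b}$, so by antitonicity of conjugation $\bm{c}^{-}\ge\bm{b}^{-}$ and hence $\delta(\bm{A},\bm{b})=\bm{c}^{-}\bm{b}\ge\bm{b}^{-}\bm{b}=\mathbb{1}$; thus $\delta$ is always at least $\mathbb{1}$. Expanding $\delta=\bigoplus_{i}c_{i}^{-1}b_{i}$, the equality $\delta=\mathbb{1}$ is equivalent to $b_{i}\le c_{i}$ for every $i$, which together with $\bm{c}\le\bm{b}$ from fact (i) yields $\bm{c}=\bm{b}$; the converse implication $\bm{c}=\bm{b}\Rightarrow\delta=\bm{b}^{-}\bm{b}=\mathbb{1}$ is immediate. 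Hence $\delta(\bm{A},\bm{b})=\mathbb{1}$ if and only if $\bm{A}\hat{\bm{x}}=\bm{b}$.

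It remains to connect $\bm{A}\hat{\bm{x}}=\bm{b}$ with solvability. If $\bm{A}\hat{\bm{x}}=\bm{b}$, then $\hat{\bm{x}}$ is itself a solution and $\bm{b}$ is linearly dependent on the columns. Conversely, if $\bm{A}\bm{x}=\bm{b}$ for some $\bm{x}$, then $\bm{x}$ is in particular a subsolution, so fact (ii) gives $\bm{x}\le\hat{\bm{x}}$, and isotonicity of multiplication yields $\bm{b}=\bm{A}\bm{x}\le\bm{A}\hat{\bm{x}}=\bm{c}\le\bm{b}$, forcing $\bm{c}=\bm{b}$. Combining the two equivalences proves the claim. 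I expect the main obstacle to be bookkeeping rather than ideas: I must track the regularity hypotheses so that each conjugate transpose is taken of a nonzero vector and that $\bm{c}$ carries no spurious zero entries, since otherwise the step $\delta=\mathbb{1}\Rightarrow\bm{c}=\bm{b}$ could break down at an index where $c_{i}=\mathbb{0}$ while $b_{i}\ne\mathbb{0}$.
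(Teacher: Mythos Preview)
The paper does not actually prove this lemma; it is quoted from \cite{Krivulin2009Methods} without argument, so there is no in-paper proof to compare against. Your argument is the standard residuation-theoretic one and is correct: the vector $\hat{\bm{x}}=(\bm{b}^{-}\bm{A})^{-}$ is the greatest subsolution of $\bm{A}\bm{x}\le\bm{b}$ (this is exactly Lemma~\ref{L-Ax-d}, which the paper states a few lines later), so $\bm{A}\bm{x}=\bm{b}$ is solvable iff $\hat{\bm{x}}$ already attains $\bm{b}$, and $\delta(\bm{A},\bm{b})=(\bm{A}\hat{\bm{x}})^{-}\bm{b}$ measures precisely the gap between $\bm{A}\hat{\bm{x}}$ and $\bm{b}$.

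One small remark on your justification of fact~(i): the clause ``since $\bm{b}^{-}\bm{b}=\mathbb{1}$'' is not really what forces $\bm{A}\hat{\bm{x}}\le\bm{b}$; the implication you need is simply that $\bm{b}^{-}\bm{c}\le\mathbb{1}$ yields $b_{i}^{-1}c_{i}\le\mathbb{1}$ termwise, hence $c_{i}\le b_{i}$. Your concern about a possible index with $c_{i}=\mathbb{0}$ is the right one to isolate; under the hypotheses you name (row- and column-regular $\bm{A}$, regular $\bm{b}$) the intermediate vector $\bm{v}=\bm{b}^{-}\bm{A}$ is regular, hence so is $\hat{\bm{x}}=\bm{v}^{-}$, and row-regularity of $\bm{A}$ then makes $\bm{c}=\bm{A}\hat{\bm{x}}$ regular, so the obstruction does not arise.
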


The set of all linear combinations of vectors $\bm{a}_{1},\ldots,\bm{a}_{n}\in\mathbb{X}^{m}$ form a linear span of the vectors, which is closed under vector addition and scalar multiplication. A linear span of two vectors in $\mathbb{R}_{\max,+}^{2}$ is displayed in Fig.~\ref{F-ASMLSV} (right) as a strip between two thick hatched lines drawn at $45^{\circ}$ to the axes.

A system of vectors $\bm{a}_{1},\ldots,\bm{a}_{n}$ is linearly dependent if at least one vector in the system is linearly dependent on others, and independent otherwise.

Two systems of vectors are considered equivalent if each vector of one system is a linear combination of vectors of the other system. Equivalent systems of vectors obviously have a common linear span.

Let $\bm{a}_{1},\ldots,\bm{a}_{n}$ be a system that may include linearly dependent vectors. To construct an equivalent linearly independent system, we use a procedure that sequentially reduces the system until it becomes linearly independent. The procedure applies the criterion provided by Lemma~\ref{E-AbAb1} to examine the vectors one by one to remove a vector if it is linearly dependent on others, or to leave the vector in the system otherwise. It is not difficult to see that the procedure results in a linearly independent system that is equivalent to the original one.

\subsection{Solution to vector inequalities}

We start with an inequality that appears in many studies in different settings, and has solutions known in various forms. Suppose that, given a matrix $\bm{A}\in\mathbb{X}^{m\times n}$ and a regular vector $\bm{d}\in\mathbb{X}^{m}$, the problem is to find vectors $\bm{x}\in\mathbb{X}^{n}$ that solve the inequality
\begin{equation}
\bm{A}\bm{x}
\leq
\bm{d}.
\label{I-Ax-d}
\end{equation}

A direct solution proposed in \cite{Krivulin2015Extremal} is described as follows.
\begin{lemma}
\label{L-Ax-d}
For any column-regular matrix $\bm{A}$ and regular vector $\bm{d}$, all solutions to inequality \eqref{I-Ax-d} are given by
\begin{equation*}
\bm{x}
\leq
(\bm{d}^{-}\bm{A})^{-}.
\label{I-xd-A}
\end{equation*}
\end{lemma}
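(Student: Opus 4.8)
The plan is to prove the equivalence $\bm{A}\bm{x}\leq\bm{d}\iff\bm{x}\leq(\bm{d}^{-}\bm{A})^{-}$ by establishing the two implications separately, working at the level of matrix--vector identities rather than components, so as to reuse the conjugation properties recorded above. Before starting I would dispose of the well-definedness: since $\bm{d}$ is regular, its conjugate $\bm{d}^{-}$ is a regular row vector, and since $\bm{A}$ is column-regular, each column contributes a nonzero entry to $\bm{d}^{-}\bm{A}$, so $\bm{d}^{-}\bm{A}$ is a regular row vector whose conjugate $\bm{c}=(\bm{d}^{-}\bm{A})^{-}$ is a regular column vector (this is precisely where column-regularity is indispensable; a zero column would spuriously force the corresponding $x_{j}=\mathbb{0}$). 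This licenses free use of $\bm{d}^{-}\bm{d}=\mathbb{1}$, $\bm{d}\bm{d}^{-}\geq\bm{I}$, $\bm{c}\bm{c}^{-}\geq\bm{I}$, and $\bm{c}^{-}\bm{c}=(\bm{d}^{-}\bm{A})\bm{c}=\mathbb{1}$, together with the isotonicity of multiplication.

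For the forward implication, suppose $\bm{A}\bm{x}\leq\bm{d}$. I would left-multiply by the row vector $\bm{d}^{-}$, which is isotone, to obtain $\bm{d}^{-}\bm{A}\bm{x}\leq\bm{d}^{-}\bm{d}=\mathbb{1}$, that is, $\bm{c}^{-}\bm{x}\leq\mathbb{1}$. Left-multiplying this by $\bm{c}$ and invoking $\bm{c}\bm{c}^{-}\geq\bm{I}$ yields $\bm{x}\leq\bm{c}\bm{c}^{-}\bm{x}\leq\bm{c}=(\bm{d}^{-}\bm{A})^{-}$, as required.

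For the converse, suppose $\bm{x}\leq(\bm{d}^{-}\bm{A})^{-}$ and left-multiply by $\bm{A}$ to get $\bm{A}\bm{x}\leq\bm{A}(\bm{d}^{-}\bm{A})^{-}$; it then suffices to verify the key bound $\bm{A}(\bm{d}^{-}\bm{A})^{-}\leq\bm{d}$. Here I would exploit associativity two ways on the product $\bm{d}\bm{d}^{-}\bm{A}(\bm{d}^{-}\bm{A})^{-}$: grouping as $(\bm{d}\bm{d}^{-})(\bm{A}(\bm{d}^{-}\bm{A})^{-})$ and using $\bm{d}\bm{d}^{-}\geq\bm{I}$ shows the product dominates $\bm{A}(\bm{d}^{-}\bm{A})^{-}$, whereas grouping as $\bm{d}((\bm{d}^{-}\bm{A})(\bm{d}^{-}\bm{A})^{-})$ and using $(\bm{d}^{-}\bm{A})(\bm{d}^{-}\bm{A})^{-}=\mathbb{1}$ shows it equals $\bm{d}$. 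Comparing the two gives $\bm{A}(\bm{d}^{-}\bm{A})^{-}\leq\bm{d}$, hence $\bm{A}\bm{x}\leq\bm{d}$.

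I expect the only genuine obstacle to be this last bound $\bm{A}(\bm{d}^{-}\bm{A})^{-}\leq\bm{d}$, since the rest reduces to isotone left-multiplications once the regularity checks are secured. As a cross-check I would keep in reserve a fully component-wise route: rewrite $\bm{A}\bm{x}\leq\bm{d}$ as $a_{ij}x_{j}\leq d_{i}$ for all $i,j$, invert the regular $d_{i}$ to read off $x_{j}\leq a_{ij}^{-1}d_{i}$ for every $i$ with $a_{ij}\neq\mathbb{0}$, and recognize $\min_{i}a_{ij}^{-1}d_{i}$ as the $j$-th component of $(\bm{d}^{-}\bm{A})^{-}$. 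The matrix-level argument is, however, shorter and matches the style of the preceding lemmas, so I would present that as the main proof.
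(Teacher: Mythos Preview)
The paper does not actually prove Lemma~\ref{L-Ax-d}; it merely quotes the result from the cited reference \cite{Krivulin2015Extremal}. So there is no in-paper proof to compare against.

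Your argument is correct. The regularity check is exactly what is needed to make $(\bm{d}^{-}\bm{A})^{-}$ a regular column vector, and both implications go through as you describe. The forward direction is the standard trick of sandwiching $\bm{x}$ via $\bm{I}\leq\bm{c}\bm{c}^{-}$, and the key bound $\bm{A}(\bm{d}^{-}\bm{A})^{-}\leq\bm{d}$ in the converse is handled cleanly by your two-way association of $\bm{d}\bm{d}^{-}\bm{A}(\bm{d}^{-}\bm{A})^{-}$; the identity $(\bm{d}^{-}\bm{A})(\bm{d}^{-}\bm{A})^{-}=\mathbb{1}$ is legitimate precisely because you checked $\bm{d}^{-}\bm{A}$ is regular. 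The componentwise cross-check you mention is the classical residuation argument and would also work, but the matrix-level version you present is shorter and fully in keeping with the style of the surrounding material.
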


Next, we consider the following problem: given a matrix $\bm{A}\in\mathbb{X}^{n\times n}$, find regular vectors $\bm{x}\in\mathbb{X}^{n}$ to satisfy the inequality
\begin{equation}
\bm{A}\bm{x}
\leq
\bm{x}.
\label{I-Ax-x}
\end{equation}

To describe a solution to the problem in a compact vector form, we define a function that takes any matrix $\bm{A}\in\mathbb{X}^{n\times n}$ to the scalar
\begin{equation*}
\mathop\mathrm{Tr}(\bm{A})
=
\bigoplus_{k=1}^{n}
\mathop\mathrm{tr}\bm{A}^{k}.
\end{equation*}
%\begin{equation*}
%\mathop\mathrm{Tr}(\bm{A})
%=
%\mathop\mathrm{tr}\bm{A}\oplus\cdots\oplus\mathop\mathrm{tr}\bm{A}^{n},
%\end{equation*}

Provided that the condition $\mathop\mathrm{Tr}(\bm{A})\leq\mathbb{1}$ holds, we use the asterisk operator (also known as the Kleene star), which maps $\bm{A}$ to the matrix
\begin{equation*}
\bm{A}^{\ast}
=
\bigoplus_{k=0}^{n-1}\bm{A}^{k}.
\end{equation*}
%\begin{equation*}
%\bm{A}^{\ast}
%=
%\bm{I}\oplus\bm{A}\oplus\cdots\oplus\bm{A}^{n-1}.
%\end{equation*}

The following result obtained in \cite{Krivulin2015Amultidimensional,Krivulin2006Solution} by using various arguments offers a direct solution to inequality \eqref{I-Ax-x}.
\begin{theorem}
\label{T-Ax-x}
For any matrix $\bm{A}$, the following statements hold:
\begin{enumerate}
\item If $\mathop\mathrm{Tr}(\bm{A})\leq\mathbb{1}$, then all regular solutions to \eqref{I-Ax-x} are given by $\bm{x}=\bm{A}^{\ast}\bm{u}$, where $\bm{u}$ is any regular vector.
\item If $\mathop\mathrm{Tr}(\bm{A})>\mathbb{1}$, then there is no regular solution.
\end{enumerate}
\end{theorem}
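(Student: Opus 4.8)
The plan is to handle the two statements separately, the crux being a \emph{stabilization} property of the Kleene star: under the hypothesis $\mathop\mathrm{Tr}(\bm{A})\leq\mathbb{1}$, the inequality $\bm{A}\bm{A}^{\ast}\leq\bm{A}^{\ast}$ holds. I would establish this property first, since both the sufficiency and the structural description of the solutions in statement~1 rest on it. Combined with the obvious bound $\bm{A}^{\ast}\geq\bm{I}$, which follows because the term $\bm{A}^{0}=\bm{I}$ appears in the sum defining $\bm{A}^{\ast}$, this is essentially all that is needed for the first part.

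For the sufficiency direction of statement~1, I would take an arbitrary regular vector $\bm{u}$ and set $\bm{x}=\bm{A}^{\ast}\bm{u}$. Using associativity of matrix multiplication together with the stabilization property, $\bm{A}\bm{x}=(\bm{A}\bm{A}^{\ast})\bm{u}\leq\bm{A}^{\ast}\bm{u}=\bm{x}$, so $\bm{x}$ solves \eqref{I-Ax-x}; and since $\bm{A}^{\ast}\geq\bm{I}$ gives $\bm{x}\geq\bm{u}$, the vector $\bm{x}$ is regular. For the converse, I would start from an arbitrary regular solution $\bm{x}$, that is $\bm{A}\bm{x}\leq\bm{x}$, and iterate: isotonicity of multiplication yields $\bm{A}^{k}\bm{x}\leq\bm{x}$ for every $k\geq0$. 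Taking the $\oplus$-sum over $k=0,\dots,n-1$ gives $\bm{A}^{\ast}\bm{x}\leq\bm{x}$, while $\bm{A}^{\ast}\geq\bm{I}$ gives the reverse inequality, whence $\bm{A}^{\ast}\bm{x}=\bm{x}$. Choosing $\bm{u}=\bm{x}$ then exhibits $\bm{x}$ in the required form, which proves that the family of vectors $\bm{A}^{\ast}\bm{u}$ captures exactly the regular solutions.

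For statement~2 I would argue by contradiction. If $\mathop\mathrm{Tr}(\bm{A})>\mathbb{1}$, then, since the total order makes a finite $\oplus$-sum coincide with its maximum, there is a $k$ with $1\leq k\leq n$ such that $\mathop\mathrm{tr}\bm{A}^{k}>\mathbb{1}$, and hence an index $i$ with a diagonal entry $(\bm{A}^{k})_{ii}>\mathbb{1}$. Suppose a regular solution $\bm{x}$ existed; iterating as above gives $\bm{A}^{k}\bm{x}\leq\bm{x}$, so in particular the $i$th component satisfies $(\bm{A}^{k}\bm{x})_{i}\leq x_{i}$. On the other hand, retaining only the diagonal term, $(\bm{A}^{k}\bm{x})_{i}\geq(\bm{A}^{k})_{ii}x_{i}$. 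Because $x_{i}$ is nonzero, hence invertible, and the order is total, the strict inequality $(\bm{A}^{k})_{ii}>\mathbb{1}$ is preserved under multiplication by $x_{i}$, giving $(\bm{A}^{k})_{ii}x_{i}>x_{i}$ and therefore $x_{i}>x_{i}$, a contradiction. Thus no regular solution can exist.

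The main obstacle is the stabilization property $\bm{A}\bm{A}^{\ast}\leq\bm{A}^{\ast}$, equivalently $\bm{A}^{k}\leq\bm{A}^{\ast}$ for all $k\geq n$. I would prove it by the usual path/cycle argument on the weighted digraph associated with $\bm{A}$: each entry of $\bm{A}^{k}$ is the maximum weight of a walk of length $k$, and any walk of length $k\geq n$ repeats a vertex and so contains a cycle; since every elementary cycle has length at most $n$, its weight is bounded by $\mathop\mathrm{Tr}(\bm{A})\leq\mathbb{1}$, and deleting the cycle yields a strictly shorter walk of no smaller weight. Iterating the deletion reduces the length below $n$, showing that each entry of $\bm{A}^{k}$ is dominated by the corresponding entry of some $\bm{A}^{j}$ with $j\leq n-1$, that is by $\bm{A}^{\ast}$. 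This combinatorial step is where the trace hypothesis does its real work; everything else is routine order-theoretic manipulation.
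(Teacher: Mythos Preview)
The paper does not prove this theorem; it is quoted as a preliminary result from \cite{Krivulin2006Solution,Krivulin2015Amultidimensional}, so there is no in-paper argument to compare against. Your proof is correct and follows the standard route taken in that literature: the stabilization property $\bm{A}\bm{A}^{\ast}\leq\bm{A}^{\ast}$ via cycle deletion on the weighted digraph of $\bm{A}$ (which is exactly where the hypothesis $\mathop\mathrm{Tr}(\bm{A})\leq\mathbb{1}$ enters), the fixed-point identity $\bm{A}^{\ast}\bm{x}=\bm{x}$ for any regular solution, and the diagonal-entry contradiction for statement~2. The only place requiring a word of care is the strict inequality $(\bm{A}^{k})_{ii}x_{i}>x_{i}$, but since $x_{i}$ is nonzero and hence invertible in the semifield, multiplication by $x_{i}$ is an order isomorphism and strictness is preserved; you handle this correctly.
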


\subsection{Representation lemma}

We apply properties of the conjugate transposition to obtain a useful result that offers an equivalent representation for a set of vectors $\bm{x}\in\mathbb{X}^{n}$, which is defined by boundaries given by a parametrized double inequality.

\begin{lemma}
\label{L-alphagxalphah}
Let $\bm{g}$ be a vector and $\bm{h}$ a regular vector such that $\bm{g}\leq\bm{h}$. Then, the following statements are equivalent:
\begin{enumerate}
\item
The vector $\bm{x}$ satisfies the double inequality
\begin{equation}
\alpha\bm{g}
\leq
\bm{x}
\leq
\alpha\bm{h},
\quad
\alpha
>
\mathbb{0}.
\label{I-alphagxalphah}
\end{equation}
\item
The vector $\bm{x}$ is given by the equality
\begin{equation}
\bm{x}
=
(\bm{I}\oplus\bm{g}\bm{h}^{-})\bm{u},
\quad
\bm{u}
>
\bm{0}.
\label{E-xIghu}
\end{equation}
\end{enumerate}
\end{lemma}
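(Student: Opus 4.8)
The plan is to establish the two implications separately, and the crucial first move in both directions is to expand the matrix product. Since $\bm{g}\bm{h}^{-}$ is a rank-one (outer-product) matrix and $\bm{h}^{-}\bm{u}$ is a $1\times1$ block, i.e.\ a scalar, I would rewrite $(\bm{I}\oplus\bm{g}\bm{h}^{-})\bm{u}=\bm{u}\oplus\bm{g}(\bm{h}^{-}\bm{u})$, so that the representation \eqref{E-xIghu} reads $\bm{x}=\bm{u}\oplus\beta\bm{g}$ with the scalar $\beta=\bm{h}^{-}\bm{u}$. Recognizing $\bm{h}^{-}\bm{u}$ (and later $\bm{h}^{-}\bm{x}$) as a scalar that can be pulled out and commuted is what reduces everything to the parametrized two-sided bound. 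I would also record at the outset that $\beta=\bm{h}^{-}\bm{u}>\mathbb{0}$ whenever $\bm{u}$ and $\bm{h}$ are regular, which supplies the parameter $\alpha$.

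For the implication $(2)\Rightarrow(1)$, given $\bm{u}>\bm{0}$ I would set $\alpha=\bm{h}^{-}\bm{u}$ and verify the two bounds on $\bm{x}=\bm{u}\oplus\alpha\bm{g}$. The lower bound $\alpha\bm{g}\leq\bm{x}$ is immediate from the definition of $\oplus$ as a least upper bound. For the upper bound $\bm{x}\leq\alpha\bm{h}$ I would check the two summands separately: the inequality $\alpha\bm{g}\leq\alpha\bm{h}$ follows from the hypothesis $\bm{g}\leq\bm{h}$ by isotonicity of multiplication, while $\bm{u}\leq\alpha\bm{h}$ follows by reassociating $\alpha\bm{h}=(\bm{h}^{-}\bm{u})\bm{h}=(\bm{h}\bm{h}^{-})\bm{u}$ and invoking the regularity inequality $\bm{h}\bm{h}^{-}\geq\bm{I}$. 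Together these give \eqref{I-alphagxalphah} with the chosen $\alpha>\mathbb{0}$.

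For the converse $(1)\Rightarrow(2)$, the natural candidate is $\bm{u}=\bm{x}$, and the task reduces to proving the fixed-point identity $\bm{x}=(\bm{I}\oplus\bm{g}\bm{h}^{-})\bm{x}$, i.e.\ $\bm{g}(\bm{h}^{-}\bm{x})\leq\bm{x}$. Writing $\beta=\bm{h}^{-}\bm{x}$, I would first bound $\beta\leq\alpha$ using the upper inequality $\bm{x}\leq\alpha\bm{h}$ together with the identity $\bm{h}^{-}\bm{h}=\mathbb{1}$, and then chain $\bm{g}\beta\leq\alpha\bm{g}\leq\bm{x}$, where the last step is exactly the lower inequality in \eqref{I-alphagxalphah}. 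This yields $\bm{x}\oplus\bm{g}(\bm{h}^{-}\bm{x})=\bm{x}$, which is \eqref{E-xIghu} with $\bm{u}=\bm{x}$. The main obstacle, and the point needing the most care, is the admissibility of this choice: the representation \eqref{E-xIghu} forces $\bm{x}\geq\bm{u}>\bm{0}$, so it only ever produces regular $\bm{x}$. Hence I must read the statement for regular $\bm{x}$ (otherwise a vector vanishing in a coordinate where $\bm{g}$ also vanishes would satisfy the double inequality but fail the representation), which is precisely what guarantees $\bm{u}=\bm{x}$ obeys $\bm{u}>\bm{0}$. Apart from this regularity bookkeeping, the argument uses only isotonicity and the two conjugation facts $\bm{h}^{-}\bm{h}=\mathbb{1}$ and $\bm{h}\bm{h}^{-}\geq\bm{I}$.
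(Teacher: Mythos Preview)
Your proof is correct and follows essentially the same route as the paper: both directions use $\alpha=\bm{h}^{-}\bm{u}$ for $(2)\Rightarrow(1)$ and $\bm{u}=\bm{x}$ for $(1)\Rightarrow(2)$, relying on the identities $\bm{h}^{-}\bm{h}=\mathbb{1}$ and $\bm{h}\bm{h}^{-}\geq\bm{I}$ in the same places. The only cosmetic difference is that you factor out the scalar $\beta=\bm{h}^{-}\bm{u}$ explicitly, whereas the paper keeps the matrix product $\bm{g}\bm{h}^{-}\bm{x}$ intact; your remark that regularity of $\bm{x}$ is needed to ensure $\bm{u}=\bm{x}>\bm{0}$ is a point the paper leaves implicit.
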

\begin{proof}
We verify that both representations follow from each other. First, suppose that a vector $\bm{x}$ satisfies double inequality \eqref{I-alphagxalphah}. Left multiplication of the right inequality at \eqref{I-alphagxalphah} by $\bm{g}\bm{h}^{-}$ yields $\bm{g}\bm{h}^{-}\bm{x}\leq\alpha\bm{g}\bm{h}^{-}\bm{h}=\alpha\bm{g}$. Considering the left inequality, we see that $\bm{x}\geq\alpha\bm{g}\geq\bm{g}\bm{h}^{-}\bm{x}$, and hence write $\bm{x}=\bm{x}\oplus\bm{g}\bm{h}^{-}\bm{x}$. With $\bm{u}=\bm{x}$, we obtain $\bm{x}=\bm{u}\oplus\bm{g}\bm{h}^{-}\bm{u}=(\bm{I}\oplus\bm{g}\bm{h}^{-})\bm{u}$, which gives \eqref{E-xIghu}.

Now assume that $\bm{x}$ is a vector given by \eqref{E-xIghu}. Take the scalar $\alpha=\bm{h}^{-}\bm{u}$ and write $\bm{x}=(\bm{I}\oplus\bm{g}\bm{h}^{-})\bm{u}\geq\bm{g}\bm{h}^{-}\bm{u}=\alpha\bm{g}$, which provides the left inequality in \eqref{I-alphagxalphah}. Furthermore, it follows from the inequalities $\bm{h}\geq\bm{g}$ and $\bm{h}\bm{h}^{-}\geq\bm{I}$ that $\bm{x}=(\bm{I}\oplus\bm{g}\bm{h}^{-})\bm{u}\leq(\bm{h}\bm{h}^{-}\oplus\bm{g}\bm{h}^{-})\bm{u}=(\bm{h}\oplus\bm{g})\bm{h}^{-}\bm{u}=\bm{h}\bm{h}^{-}\bm{u}=\alpha\bm{h}$, and therefore, the right inequality is valid as well.
%\qed
\end{proof}

Fig.~\ref{F-ESS12} offers a graphical illustration in terms of $\mathbb{R}_{\max,+}^{2}$ for the representation lemma. An example set defined by inequality \eqref{I-alphagxalphah} is depicted on the left. The rectangle, formed by horizontal and vertical lines drawn through the ends of the vectors $\bm{g}=(g_{1},g_{2})^{T}$ and $\bm{h}=(h_{1},h_{2})^{T}$, shows the boundaries of the set given by \eqref{I-alphagxalphah} with $\alpha=0$. The whole set is then represented as the strip area between thick hatched lines, which is covered when the rectangle shifts at $45^{\circ}$ to the axes in response to the variation of $\alpha$.
\begin{figure}[ht]
\setlength{\unitlength}{1mm}
%\begin{center}
\begin{picture}(50,45)

\put(0,20){\vector(1,0){50}}
\put(25,0){\vector(0,1){45}}

%%%%%%%

\put(25,20){\thicklines\vector(-1,1){5}}

\put(20,25){\line(1,0){20}}
\put(20,25){\line(0,1){5}}

\put(1.9,12){\thicklines\line(1,1){29}}
\put(2.1,12){\thicklines\line(1,1){29}}
\multiput(2,12)(1,1){29}{\line(1,0){1}}

\put(25,20){\thicklines\vector(3,2){15}}

\put(40,30){\line(-1,0){20}}
\put(40,30){\line(0,-1){11}}

\put(16.9,2){\thicklines\line(1,1){29}}
\put(17.1,2){\thicklines\line(1,1){29}}
\multiput(17,2)(1,1){29}{\line(-1,0){1}}

%\put(4,9){\line(1,1){31}}

%\put(5,10){\line(1,0){20}}
%\put(10,15){\line(0,1){5}}

%\put(25,20){\thicklines\vector(-3,-1){15}}

%\put(25,20){\thicklines\vector(-2,-1){20}}

\put(16,22){$\bm{g}$}

\put(40,32){$\bm{h}$}

\put(27,32){$h_{2}$}
\put(39,15){$h_{1}$}

%%%%%%%

\end{picture}
\hspace{20\unitlength}
\begin{picture}(50,45)

\put(0,20){\vector(1,0){50}}
\put(25,0){\vector(0,1){45}}

%%%%%%%

\put(25,20){\thicklines\vector(-1,1){5}}

%\put(20,25){\line(1,0){20}}
%\put(20,25){\line(0,1){5}}

\put(1.9,12){\thicklines\line(1,1){29}}
\put(2.1,12){\thicklines\line(1,1){29}}
\multiput(2,12)(1,1){29}{\line(1,0){1}}

\put(25,20){\thicklines\vector(3,2){15}}

%\put(40,30){\line(-1,0){16}}
%\put(40,30){\line(0,-1){11}}

\put(16.9,2){\thicklines\line(1,1){29}}
\put(17.1,2){\thicklines\line(1,1){29}}
\multiput(17,2)(1,1){29}{\line(-1,0){1}}

\put(4,9){\line(1,1){30}}

\put(5,10){\line(1,0){20}}
\put(10,15){\line(0,1){5}}

\put(25,20){\thicklines\vector(-3,-1){15}}

\put(25,20){\thicklines\vector(-2,-1){20}}

\put(15,30){$\bm{g}$}
\put(40,32){$\bm{h}$}

\put(1,3){$h_{1}^{-1}\bm{g}$}
\put(1,24){$h_{2}^{-1}\bm{g}$}

%\put(27,32){$h_{2}$}
%\put(39,15){$h_{1}$}

%%%%%%%

\end{picture}
%\end{center}
\caption{An example set defined in $\mathbb{R}_{\max,+}^{2}$ by conditions \eqref{I-alphagxalphah} (left) and \eqref{E-xIghu} (right).}
\label{F-ESS12}
\end{figure}
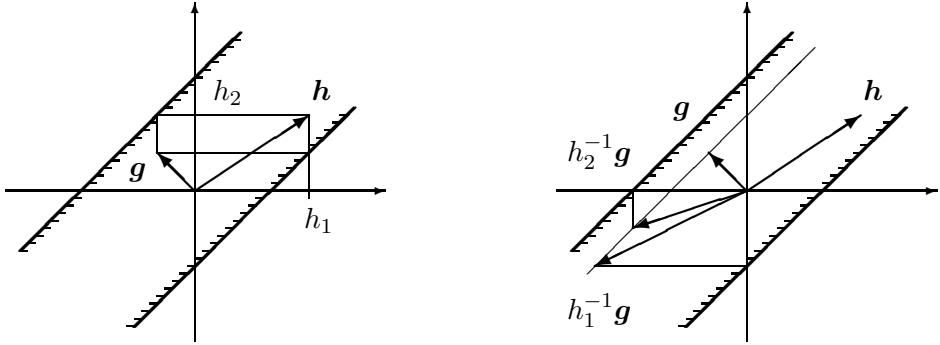

According to representation \eqref{E-xIghu}, the same area is shown on the right as the linear span of the columns in the matrix $\bm{I}\oplus\bm{g}\bm{h}^{-}$, where $\bm{g}\bm{h}^{-}=(h_{1}^{-1}\bm{g},h_{2}^{-1}\bm{g})$.

\section{Tropical optimization problems}
\label{S-TOP}

In this section, we present optimization problems of interest, and outline previous results on their solution. The problems are formulated in general terms of an arbitrary tropical semifield to minimize and maximize a nonlinear function defined by conjugate transposition of vectors. This function arises in various problems, which involve the minimization and maximization of span seminorm and find applications in optimal scheduling. Examples include problems in just-in-time manufacturing \cite{Krivulin2013Explicit} and machine scheduling \cite{Krivulin2016Amaximization}.

First, we consider the minimization problem: given a matrix $\bm{A}\in\mathbb{X}^{m\times n}$ and vectors $\bm{p}\in\mathbb{X}^{m}$, $\bm{q}\in\mathbb{X}^{n}$, find regular vectors $\bm{x}\in\mathbb{X}^{n}$ that
\begin{equation}
\begin{aligned}
&
\text{minimize}
&&
\bm{q}^{-}\bm{x}(\bm{A}\bm{x})^{-}\bm{p}.
\end{aligned}
\label{P-minqxAxp}
\end{equation}

Note that substitution of $\alpha\bm{x}$, where $\alpha\ne\mathbb{0}$, for the vector $\bm{x}$ does not affect the objective function, and thus all solutions of \eqref{P-minqxAxp} are scale-invariant.

A partial solution to the problem formulated in a slightly different form was given in \cite{Krivulin2013Explicit} as follows.
\begin{lemma}
Let $\bm{A}$ be a row-regular matrix, $\bm{p}$ be nonzero and $\bm{q}$ regular vectors. Then, the minimum value in problem \eqref{P-minqxAxp} is equal to $\Delta=(\bm{A}\bm{q})^{-}\bm{p}$, and attained at any vector $\bm{x}=\alpha\bm{q}$, where $\alpha>\mathbb{0}$.
\end{lemma}

Furthermore, we formulate the maximization problem with the same objective function to find regular vectors $\bm{x}\in\mathbb{X}^{n}$ that  
\begin{equation}
\begin{aligned}
&
\text{maximize}
&&
\bm{q}^{-}\bm{x}(\bm{A}\bm{x})^{-}\bm{p}.
\end{aligned}
\label{P-maxqxAxp}
\end{equation}

A complete solution of the problem based on the results in \cite{Krivulin2016Amaximization} can be described in the form of the next statement.
\begin{lemma}
\label{L-maxqxAxp}
Let $\bm{A}=(\bm{a}_{j})$ be a matrix with regular columns $\bm{a}_{j}=(a_{ij})$, $\bm{p}=(p_{j})$ and $\bm{q}=(q_{j})$ be regular vectors. Then, the maximum value in problem \eqref{P-maxqxAxp} is equal to $\Delta
=\bm{q}^{-}\bm{A}^{-}\bm{p}$, and attained if and only if the vector $\bm{x}=(x_{j})$ has the elements
\begin{equation}
x_{k}
=
\alpha\bm{a}_{k}^{-}\bm{p},
\quad
x_{j}
\leq
\alpha a_{sj}^{-1}p_{s},
\quad
j\ne k,
\label{I-xkalphaakp}
\end{equation}
for all $\alpha>\mathbb{0}$ and indices $k$ and $s$ given by
\begin{equation}
k
=
\arg\max_{1\leq j\leq n}q_{j}^{-1}\bm{a}_{j}^{-}\bm{p},
\quad
s
=
\arg\max_{1\leq i\leq m}a_{ik}^{-1}p_{i}.
\label{E-kqjajp-saikpi}
\end{equation}
\end{lemma} 

In the subsequent sections, we use matrix sparsification to derive and represent complete solutions to both problems in a compact vector form.

\section{Solution to the minimization problem}
\label{S-SMinP}

We start with a complete solution of the minimization problem given by \eqref{P-minqxAxp}. As the first step, we follow the arguments in \cite{Krivulin2013Explicit} to find the minimum value, and to derive a partial solution of the problem. Then, we reduce the problem to the solution of simultaneous equation and inequality, and investigate properties of the solution set.

To extend the partial solution, we suggest an entry-wise thresholding procedure to sparsify the matrix in the problem. Then, we apply the sparsified matrix to find new solutions, and illustrate the result with an example.

Furthermore, we describe all solutions as a family of sets, each defined by a matrix obtained from the sparsified matrix of the problem. Next, a backtracking procedure that generates all members in the family of solutions is discussed. Finally, we combine the solutions to provide a direct representation of a complete solution of the problem in a compact closed form.

\subsection{Analysis and characterization of solution}

The next lemma includes the derivation of the partial solution taken from \cite{Krivulin2013Explicit}, which is added to the proof to provide completeness of the argument.
\begin{lemma}
\label{L-minqxAxp}
Let $\bm{A}$ be a row-regular matrix, $\bm{p}$ be nonzero and $\bm{q}$ regular vectors. Then, the minimum value in problem \eqref{P-minqxAxp} is equal to
\begin{equation*}
\Delta
=
(\bm{A}\bm{q})^{-}\bm{p},
%\label{E-Delta}
\end{equation*}
and all regular vectors $\bm{x}$ that produce this minimum are defined by the system
\begin{equation}
\bm{q}^{-}\bm{x}
=
\alpha,
\quad
\bm{A}\bm{x}
\geq
\alpha\Delta^{-1}\bm{p},
\quad
\alpha
>
\mathbb{0}.
\label{E-qxalpha-I-AxalphaDeltap}
\end{equation}

Specifically, the minimum is attained at any vector $\bm{x}=\alpha\bm{q}$, where $\alpha>\mathbb{0}$.
\end{lemma}

\begin{proof}
To obtain the minimum value of the objective function in problem \eqref{P-minqxAxp}, we derive a lower bound for the function, and then show that this bound is strict.

Suppose that $\bm{x}$ is a regular solution of the problem. Since $\bm{x}\bm{x}^{-}\geq\bm{I}$, we have $(\bm{q}^{-}\bm{x})^{-1}\bm{x}=(\bm{q}^{-}\bm{x}\bm{x}^{-})^{-}\leq\bm{q}$. Next, left multiplication by the matrix $\bm{A}$ gives the inequality $(\bm{q}^{-}\bm{x})^{-1}\bm{A}\bm{x}\leq\bm{A}\bm{q}$, where both sides are regular vectors. Finally, conjugate transposition followed by right multiplication by the vector $\bm{p}$ yields the lower bound $\bm{q}^{-}\bm{x}(\bm{A}\bm{x})^{-}\bm{p}\geq(\bm{A}\bm{q})^{-}\bm{p}=\Delta>\mathbb{0}$.

With $\bm{x}=\bm{q}$, the objective function becomes $\bm{q}^{-}\bm{x}(\bm{A}\bm{x})^{-}\bm{p}=(\bm{A}\bm{q})^{-}\bm{p}=\Delta $, and therefore, $\Delta $ is the minimum value of the problem. 

Considering that all solutions are scale-invariant, we see that not only the vector $\bm{q}$, but also any vector $\bm{x}=\alpha\bm{q}$ with nonzero $\alpha$ solves the problem. 

Furthermore, all vectors $\bm{x}$ that yield the minimum must satisfy the equation
$$
\bm{q}^{-}\bm{x}(\bm{A}\bm{x})^{-}\bm{p}
=
\Delta.
$$

To examine the equation, we put $\alpha=\bm{q}^{-}\bm{x}>\mathbb{0}$, and rewrite it in an equivalent form as the system
$$
\bm{q}^{-}\bm{x}
=
\alpha,
\quad
(\bm{A}\bm{x})^{-}\bm{p}
=
\alpha^{-1}\Delta.
$$

It follows from the first equation that each solution $\bm{x}$ meets the condition $\bm{x}\leq\alpha\bm{q}$. To see this, we consider the inequality $\bm{q}^{-}\bm{x}\leq\alpha$ as a consequence of the equation, and then apply Lemma~\ref{L-Ax-d} to solve the last inequality for $\bm{x}$.

Next, we examine the second equation, which can be replaced by two opposite inequalities $(\bm{A}\bm{x})^{-}\bm{p}\leq\alpha^{-1}\Delta$ and $(\bm{A}\bm{x})^{-}\bm{p}\geq\alpha^{-1}\Delta$. An application of Lemma~\ref{L-Ax-d} to the first inequality with $\bm{p}$ as the unknown vector gives the inequality $\bm{p}\leq\alpha^{-1}\Delta\bm{A}\bm{x}$, which is equivalent to $\bm{A}\bm{x}\geq\alpha\Delta^{-1}\bm{p}$. At the same time, the condition $\bm{x}\leq\alpha\bm{q}$ leads to $(\bm{A}\bm{x})^{-}\bm{p}\geq\alpha^{-1}(\bm{A}\bm{q})^{-}\bm{p}=\alpha^{-1}\Delta$, and thus makes the second inequality superfluous.

As a result, the system under investigation reduces to the form of \eqref{E-qxalpha-I-AxalphaDeltap}.
%\qed
\end{proof}

The following statement is an important consequence of Lemma~\ref{L-minqxAxp}.

\begin{corollary}
\label{C-minqxAxp}
Let $\bm{A}$ be a row-regular matrix, $\bm{p}$ be nonzero and $\bm{q}$ regular vectors. Then, the set of regular solutions of problem \eqref{P-minqxAxp} is closed under vector addition and scalar multiplication.
\end{corollary}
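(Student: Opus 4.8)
The plan is to read off closure directly from the characterization of the solution set supplied by Lemma~\ref{L-minqxAxp}. By that lemma, a regular vector $\bm{x}$ solves \eqref{P-minqxAxp} if and only if it satisfies system \eqref{E-qxalpha-I-AxalphaDeltap}; equivalently, writing $\alpha=\bm{q}^{-}\bm{x}$ (which is nonzero for regular $\bm{x}$ and regular $\bm{q}$), the vector $\bm{x}$ is a solution exactly when
\[
\bm{A}\bm{x}
\geq
(\bm{q}^{-}\bm{x})\Delta^{-1}\bm{p},
\]
with $\Delta=(\bm{A}\bm{q})^{-}\bm{p}$. The whole point is that both $\bm{q}^{-}\bm{x}$ and $\bm{A}\bm{x}$ are linear (in the tropical sense) in $\bm{x}$, so the defining condition behaves well under the two operations.

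For scalar multiplication, I would simply appeal to the scale-invariance already observed immediately after \eqref{P-minqxAxp}: replacing $\bm{x}$ by $\beta\bm{x}$ with $\beta>\mathbb{0}$ leaves the objective function unchanged, hence carries solutions to solutions. Alternatively, one checks directly that scaling $\bm{x}$ by $\beta$ scales $\alpha=\bm{q}^{-}\bm{x}$ by $\beta$ and multiplies both sides of the displayed inequality by $\beta$, preserving it; and $\beta\bm{x}$ is regular whenever $\bm{x}$ is and $\beta>\mathbb{0}$.

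For vector addition, I would take two regular solutions $\bm{x}_{1}$ and $\bm{x}_{2}$, set $\alpha_{1}=\bm{q}^{-}\bm{x}_{1}$ and $\alpha_{2}=\bm{q}^{-}\bm{x}_{2}$, so that $\bm{A}\bm{x}_{i}\geq\alpha_{i}\Delta^{-1}\bm{p}$ for $i=1,2$. The sum $\bm{x}_{1}\oplus\bm{x}_{2}$ is again regular. Using distributivity of the matrix-vector product over vector addition, I get $\bm{q}^{-}(\bm{x}_{1}\oplus\bm{x}_{2})=\alpha_{1}\oplus\alpha_{2}$ and $\bm{A}(\bm{x}_{1}\oplus\bm{x}_{2})=\bm{A}\bm{x}_{1}\oplus\bm{A}\bm{x}_{2}$. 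Adding the two inequalities and invoking isotonicity then yields
\[
\bm{A}(\bm{x}_{1}\oplus\bm{x}_{2})
\geq
\alpha_{1}\Delta^{-1}\bm{p}\oplus\alpha_{2}\Delta^{-1}\bm{p}
=
(\alpha_{1}\oplus\alpha_{2})\Delta^{-1}\bm{p},
\]
which is exactly the defining condition for $\bm{x}_{1}\oplus\bm{x}_{2}$ with its own parameter $\alpha_{1}\oplus\alpha_{2}$. Hence the sum is a solution.

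The argument is short, and there is no serious obstacle; the only point requiring care is the bookkeeping of the parameter $\alpha$. Since $\alpha$ in \eqref{E-qxalpha-I-AxalphaDeltap} is not free but is forced to equal $\bm{q}^{-}\bm{x}$, I must confirm that the parameter attached to the sum (resp.\ to the scalar multiple) is consistent with this constraint, that is, that it equals $\alpha_{1}\oplus\alpha_{2}$ (resp.\ $\beta\alpha$); this is precisely what the linearity of $\bm{q}^{-}\bm{x}$ in $\bm{x}$ guarantees. I would also note in passing that regularity is preserved under both operations, so the resulting vectors genuinely lie in the admissible class of regular vectors.
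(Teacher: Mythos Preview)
Your proof is correct and follows essentially the same approach as the paper: both use the characterization \eqref{E-qxalpha-I-AxalphaDeltap} from Lemma~\ref{L-minqxAxp}, verify that $\bm{q}^{-}(\bm{x}_{1}\oplus\bm{x}_{2})=\alpha_{1}\oplus\alpha_{2}$ and $\bm{A}(\bm{x}_{1}\oplus\bm{x}_{2})\geq(\alpha_{1}\oplus\alpha_{2})\Delta^{-1}\bm{p}$ by distributivity, and note that scalar multiplication is handled analogously (the paper simply says it ``is examined in a similar manner''). Your version is slightly more explicit about regularity and the role of the parameter $\alpha$, but the substance is identical.
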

\begin{proof}
To verify the statement, we only consider addition, since scalar multiplication is examined in a similar manner. Suppose vectors $\bm{x}$ and $\bm{y}$ are regular solutions of problem \eqref{P-minqxAxp}, such that the vector $\bm{x}$ satisfies system \eqref{E-qxalpha-I-AxalphaDeltap}, whereas $\bm{y}$ solves the system
$$
\bm{q}^{-}\bm{y}
=
\beta,
\quad
\bm{A}\bm{y}
\geq
\beta\Delta^{-1}\bm{p},
\quad
\beta
>
\mathbb{0}.
$$

Furthermore, we immediately verify that $\bm{q}^{-}(\bm{x}\oplus\bm{y})=\bm{q}^{-}\bm{x}\oplus\bm{q}^{-}\bm{y}=\alpha\oplus\beta$ and $\bm{A}(\bm{x}\oplus\bm{y})=\bm{A}\bm{x}\oplus\bm{A}\bm{y}\geq(\alpha\oplus\beta)\Delta^{-1}\bm{p}$, which shows that the sum $\bm{x}\oplus\bm{y}$ also obeys system \eqref{E-qxalpha-I-AxalphaDeltap}, where $\alpha$ is replaced by $\alpha\oplus\beta$.
%\qed
\end{proof}

Note that an application of Lemma~\ref{L-alphagxalphah} provides problem \eqref{P-minqxAxp} with another representation of the solution $\bm{x}=\alpha\bm{q}$ in the form
$$
\bm{x}
=
(\bm{I}\oplus\bm{q}\bm{q}^{-})\bm{u},
\quad
\bm{u}
>
\bm{0}.
$$

However, this representation is not sufficiently different from that offered by Lemma~\ref{L-minqxAxp}. Indeed, considering that the vector $\bm{q}$ is regular, we immediately obtain $\bm{x}=(\bm{I}\oplus\bm{q}\bm{q}^{-})\bm{u}=\bm{q}\bm{q}^{-}\bm{u}=\alpha\bm{q}$, where we take $\alpha=\bm{q}^{-}\bm{u}$.

\subsection{Matrix sparsification}

To derive an extended solution of problem \eqref{P-minqxAxp}, we use a procedure that sets each entry of the matrix $\bm{A}$ to $\mathbb{0}$ if it is below a threshold value determined by both this matrix and the vectors $\bm{p}$ and $\bm{q}$, and leaves the entry unchanged otherwise. The next result introduces the sparsified matrix, and shows that the sparsification does not affect the solution of the problem.

\begin{lemma}
\label{L-Sparse}
Let $\bm{A}=(a_{ij})$ be a row-regular matrix, $\bm{p}=(p_{i})$ be a nonzero, $\bm{q}=(q_{j})$ be a regular vector, and $\Delta=(\bm{A}\bm{q})^{-}\bm{p}$. Define the sparsified matrix $\widehat{\bm{A}}=(\widehat{a}_{ij})$ with the entries
\begin{equation}
\widehat{a}_{ij}
=
\begin{cases}
a_{ij},
&
\text{if $a_{ij}\geq\Delta^{-1}p_{i}$}q_{j}^{-1};
\\
\mathbb{0},
&
\text{otherwise}.
\end{cases}
\label{E-aijprime}
\end{equation}

Then, replacing the matrix $\bm{A}$ by $\widehat{\bm{A}}$ does not change the solutions of problem \eqref{P-minqxAxp}.
\end{lemma}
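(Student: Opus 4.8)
The plan is to show that the original problem \eqref{P-minqxAxp} and its sparsified counterpart share the same minimum value and the same set of regular minimizers, by reducing each to the characterization of Lemma~\ref{L-minqxAxp} and then proving that the two resulting systems of the form \eqref{E-qxalpha-I-AxalphaDeltap} describe the same vectors. The first preparatory step is to verify that $\widehat{\bm{A}}$ is again row-regular and that $\widehat{\bm{A}}\bm{q}=\bm{A}\bm{q}$, so that the scalar $\Delta$ is preserved under sparsification. I would rewrite the defining condition in \eqref{E-aijprime} as $(a_{ij}q_{j})^{-1}p_{i}\leq\Delta$, and observe that in each row $i$ the maximum $(\bm{A}\bm{q})_{i}=\bigoplus_{j}a_{ij}q_{j}$ is attained at some index $j^{\ast}$ for which the term $(\bm{A}\bm{q})_{i}^{-1}p_{i}$ is one of the summands of $\Delta=(\bm{A}\bm{q})^{-}\bm{p}$ and hence does not exceed $\Delta$. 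This is precisely the condition for the entry $(i,j^{\ast})$ to survive, so each row of $\widehat{\bm{A}}$ keeps a nonzero entry and $(\widehat{\bm{A}}\bm{q})_{i}=(\bm{A}\bm{q})_{i}$; together with $\widehat{\bm{A}}\leq\bm{A}$ this yields $\widehat{\bm{A}}\bm{q}=\bm{A}\bm{q}$ and therefore $(\widehat{\bm{A}}\bm{q})^{-}\bm{p}=\Delta$.

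With the row-regularity of $\widehat{\bm{A}}$ established, Lemma~\ref{L-minqxAxp} applies to the sparsified problem and gives the same minimum $\Delta$, with minimizers described by the system $\bm{q}^{-}\bm{x}=\alpha$, $\widehat{\bm{A}}\bm{x}\geq\alpha\Delta^{-1}\bm{p}$, $\alpha>\mathbb{0}$, while the original minimizers satisfy the same system with $\bm{A}$ in place of $\widehat{\bm{A}}$. One inclusion is then immediate: since $\widehat{\bm{A}}\leq\bm{A}$ entry-wise we have $\widehat{\bm{A}}\bm{x}\leq\bm{A}\bm{x}$, so any $\bm{x}$ solving the sparsified system also solves the original one.

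The reverse inclusion is the main obstacle, and it is here that the choice of threshold does its work. Given a solution $\bm{x}$ of the original system, the equation $\bm{q}^{-}\bm{x}=\alpha$ combined with Lemma~\ref{L-Ax-d} gives $\bm{x}\leq\alpha\bm{q}$, that is $x_{j}\leq\alpha q_{j}$. For any entry dropped in row $i$, the strict inequality $a_{ij}<\Delta^{-1}p_{i}q_{j}^{-1}$ then forces $a_{ij}x_{j}\leq a_{ij}\alpha q_{j}<\alpha\Delta^{-1}p_{i}$, so every discarded term lies strictly below the bound $\alpha\Delta^{-1}p_{i}$ that $(\bm{A}\bm{x})_{i}$ must meet. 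Working in the assumed total order, I would conclude that a maximum of finitely many terms, each discarded one strictly below the bound, can reach the bound only through a retained term; hence the surviving entries alone already realize $(\widehat{\bm{A}}\bm{x})_{i}\geq\alpha\Delta^{-1}p_{i}$, and so $\widehat{\bm{A}}\bm{x}\geq\alpha\Delta^{-1}\bm{p}$. This places $\bm{x}$ in the sparsified solution set and, with the previous inclusion, proves the two sets equal. The rows with $p_{i}=\mathbb{0}$ require only a brief separate remark, since there the threshold equals $\mathbb{0}$, no entry is dropped, and the corresponding inequality is vacuous.
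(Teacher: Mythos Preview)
Your proposal is correct and follows essentially the same approach as the paper. Both arguments hinge on the same two observations: first, in each row the entry realizing $(\bm{A}\bm{q})_{i}$ survives the threshold, so $\widehat{\bm{A}}$ is row-regular with $\widehat{\bm{A}}\bm{q}=\bm{A}\bm{q}$ and $\Delta$ is preserved; second, for any minimizer $\bm{x}$ one has $x_{j}\leq\alpha q_{j}$, so every dropped entry contributes a term $a_{ij}x_{j}<\alpha\Delta^{-1}p_{i}$ strictly below the required bound and hence cannot affect whether $(\bm{A}\bm{x})_{i}\geq\alpha\Delta^{-1}p_{i}$. The only organizational difference is that you invoke the characterization of Lemma~\ref{L-minqxAxp} for both problems and compare the resulting systems via two inclusions, whereas the paper argues directly on the optimality equation $\bm{q}^{-}\bm{x}(\bm{A}\bm{x})^{-}\bm{p}=\Delta$ to show that setting the sub-threshold entries to $\mathbb{0}$ leaves its solution set unchanged; the underlying computation is identical.
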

\begin{proof}
We first verify that the sparsification retains the minimum value given by Lemma~\ref{L-minqxAxp} in the form $\Delta=(\bm{A}\bm{q})^{-}\bm{p}$. We define indices $k$ and $s$ by the conditions
$$
k
=
\arg\max_{1\leq i\leq m}(a_{i1}q_{1}\oplus\cdots\oplus a_{in}q_{n})^{-1}p_{i},
\quad
s
=
\arg\max_{1\leq j\leq n}a_{kj}q_{j},
$$
and then represent $\Delta$ by using the scalar equality
$$
\Delta 
=
\bigoplus_{i=1}^{m}(a_{i1}q_{1}\oplus\cdots\oplus a_{in}q_{n})^{-1}p_{i}
=
(a_{k1}q_{1}\oplus\cdots\oplus a_{kn}q_{n})^{-1}p_{k}
=
(a_{ks}q_{s})^{-1}p_{k}.
$$

The regularity of $\bm{A}$ and $\bm{q}$ guarantees that $a_{i1}q_{1}\oplus\cdots\oplus a_{in}q_{n}>\mathbb{0}$ for all $i$. Since $\bm{p}$ is nonzero, we see that $\Delta>\mathbb{0}$ as well as that $a_{ks}>\mathbb{0}$ and $p_{k}>\mathbb{0}$.

Let us examine an arbitrary row $i$ in the matrix $\bm{A}$. The above equality for $\Delta$ yields the inequality $\Delta\geq(a_{i1}q_{1}\oplus\cdots\oplus a_{in}q_{n})^{-1}p_{i}$, which is equivalent to the inequality $a_{i1}q_{1}\oplus\cdots\oplus a_{in}q_{n}\geq\Delta^{-1}p_{i}$. Because the order defined by the relation $\leq$ is assumed total, the last inequality is valid if and only if the condition $a_{ij}q_{j}\geq\Delta^{-1}p_{i}$ holds for some $j$.

Thus, we conclude that each row $i$ of $\bm{A}$ has at least one entry $a_{ij}$ to satisfy the inequality
\begin{equation}
a_{ij}
\geq
\Delta^{-1}p_{i}q_{j}^{-1}.
\label{I-aijDelat1piqj1}
\end{equation}

Now consider row $k$ in the matrix $\bm{A}$ to verify the inequality $a_{kj}\leq\Delta^{-1}p_{k}q_{j}^{-1}$ for all $j$. Indeed, provided that $a_{kj}=\mathbb{0}$, the inequality is trivially true. If $a_{kj}>\mathbb{0}$, then we have $(a_{kj}q_{j})^{-1}p_{k}\geq(a_{k1}q_{1}\oplus\cdots\oplus a_{kn}q_{n})^{-1}p_{k}=\Delta$, which gives the desired inequality. Since $\Delta=(a_{ks}q_{s})^{-1}p_{k}$, we see that row $k$ has entries, which turns inequality \eqref{I-aijDelat1piqj1} into an equality, but no entry for which \eqref{I-aijDelat1piqj1} becomes strict.

Suppose that inequality \eqref{I-aijDelat1piqj1} fails for some $i$ and $j$. Provided that $p_{i}>\mathbb{0}$, we write $a_{ij}<\Delta^{-1}p_{i}q_{j}^{-1}\leq(a_{i1}q_{1}\oplus\cdots\oplus a_{in}q_{n})q_{j}^{-1}$, which gives the inequality $a_{ij}q_{j}<a_{i1}q_{1}\oplus\cdots\oplus a_{in}q_{n}$. The last inequality means that decreasing $a_{ij}q_{j}$ through lowering of $a_{ij}$ down to $\mathbb{0}$ does not affect the value of $a_{i1}q_{1}\oplus\cdots\oplus a_{in}q_{n}$, and hence the value of $\Delta\geq(a_{i1}q_{1}\oplus\cdots\oplus a_{in}q_{n})^{-1}p_{i}$. Note that if $p_{i}=\mathbb{0}$, then $\Delta$ does not depend at all on the entries in row $i$, including, certainly, $a_{ij}$.

We now verify that all entries $a_{ij}$ that do not satisfy inequality \eqref{I-aijDelat1piqj1} can be set to $\mathbb{0}$ without affecting not only the minimum value $\Delta$, but also the regular solutions of problem \eqref{P-minqxAxp}. First, note that all vectors $\bm{x}=(x_{j})$ providing the minimum in the problem are determined by the equation $\bm{q}^{-}\bm{x}(\bm{A}\bm{x})^{-}\bm{p}=\Delta$.

We represent this equation in the scalar form
$$
(q_{1}^{-1}x_{1}\oplus\cdots\oplus q_{n}^{-1}x_{n})\bigoplus_{i=1}^{m}(a_{i1}x_{1}\oplus\cdots\oplus a_{in}x_{n})^{-1}p_{i}
=
\Delta,
$$
which yields that $a_{i1}x_{1}\oplus\cdots\oplus a_{in}x_{n}\geq\Delta^{-1}(q_{1}^{-1}x_{1}\oplus\cdots\oplus q_{n}^{-1}x_{n})p_{i}$ for all $i$.

Assume the matrix $\bm{A}$ to have an entry, say $a_{ij}$, that satisfies the condition $a_{ij}<\Delta^{-1}p_{i}q_{j}^{-1}$, and thereby violates inequality \eqref{I-aijDelat1piqj1}. Provided that $p_{i}=\mathbb{0}$, the condition leads to the equality $a_{ij}=\mathbb{0}$. Suppose that $p_{i}>\mathbb{0}$, and write
$$
a_{ij}x_{j}
<
\Delta^{-1}p_{i}q_{j}^{-1}x_{j}
\leq
\Delta^{-1}(q_{1}^{-1}x_{1}\oplus\cdots\oplus q_{n}^{-1}x_{n})p_{i}
\leq
a_{i1}x_{1}\oplus\cdots\oplus a_{in}x_{n}.
$$

This inequality implies that, for each solution of the above equation, the term $a_{ij}x_{j}$ does not contribute to the value of the entire sum $a_{i1}x_{1}\oplus\cdots\oplus a_{in}x_{n}$ involved in the calculation of the left-hand side of the equation. Therefore, we can set $a_{ij}$ to $\mathbb{0}$ without altering the solutions of this equation. 

It remains to see that setting the entries $a_{ij}$, which do not satisfy inequality \eqref{I-aijDelat1piqj1}, to $\mathbb{0}$ is equivalent to the replacement of $\bm{A}$ by the matrix $\widehat{\bm{A}}$.
%\qed
\end{proof}

The matrix obtained after the sparsification procedure for problem \eqref{P-minqxAxp} is referred to below as the sparsified matrix of the problem.

Note that the sparsification of the matrix $\bm{A}$ according to definition \eqref{E-aijprime} is actually determined by the threshold matrix $\Delta^{-1}\bm{p}\bm{q}^{-}$, which contains the threshold values for corresponding entries of $\bm{A}$.

Let $\widehat{\bm{A}}$ be the sparsified matrix for $\bm{A}$, based on the threshold matrix $\Delta^{-1}\bm{p}\bm{q}^{-}$. Then, it follows directly from \eqref{E-aijprime} that the inequality $\widehat{\bm{A}}^{-}\leq\Delta\bm{q}\bm{p}^{-}$ is valid.

\subsection{Extended solution set}

We now assume problem \eqref{P-minqxAxp} already has a sparsified matrix. Under this assumption, we use the characterization of solutions given by Lemma~\ref{L-minqxAxp} to improve the partial solution provided by this lemma by further extending the solution set.  

\begin{theorem}
\label{T-minqxAxp0}
Let $\bm{A}$ be a row-regular sparsified matrix of problem \eqref{P-minqxAxp} with a nonzero vector $\bm{p}$ and a regular vector $\bm{q}$. 

Then, the minimum value in the problem is equal to $\Delta=(\bm{A}\bm{q})^{-}\bm{p}$, and attained at any vector $\bm{x}$ given by the conditions 
\begin{equation}
\alpha\Delta^{-1}\bm{A}^{-}\bm{p}
\leq
\bm{x}
\leq
\alpha\bm{q},
\quad
\alpha
>
\mathbb{0};
\label{I-alphadelta1Apxq}
\end{equation}
or, equivalently, by the conditions
\begin{equation}
\bm{x}
=
(\bm{I}\oplus\Delta^{-1}\bm{A}^{-}\bm{p}\bm{q}^{-})\bm{u},
\quad
\bm{u}
>
\bm{0}.
\label{E-xIdelta1Apqu}
\end{equation}
\end{theorem}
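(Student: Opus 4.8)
The plan is to reduce the statement to machinery already available: Lemma~\ref{L-minqxAxp} fixes the value of the minimum and characterizes all regular minimizers, the sparsification bound $\bm{A}^{-}\leq\Delta\bm{q}\bm{p}^{-}$ recorded after Lemma~\ref{L-Sparse} supplies the crucial order relation, and the representation lemma (Lemma~\ref{L-alphagxalphah}) translates between the double inequality \eqref{I-alphadelta1Apxq} and the parametric form \eqref{E-xIdelta1Apqu}. I would not attempt a fresh computation of the minimum; instead I would lean on the fact that the hypotheses here (row-regular matrix, nonzero $\bm{p}$, regular $\bm{q}$) are exactly those of Lemma~\ref{L-minqxAxp}.

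First I would note that a sparsified matrix is still row-regular, so Lemma~\ref{L-minqxAxp} applies verbatim: the minimum equals $\Delta=(\bm{A}\bm{q})^{-}\bm{p}$, and the regular minimizers are precisely the $\bm{x}$ obeying system \eqref{E-qxalpha-I-AxalphaDeltap}. This settles the value of the minimum and reduces the remaining task to showing that every $\bm{x}$ described by \eqref{I-alphadelta1Apxq} satisfies that system, hence is a minimizer.

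The central step is to check the hypothesis of the representation lemma, namely $\bm{g}\leq\bm{h}$ with $\bm{g}=\Delta^{-1}\bm{A}^{-}\bm{p}$ and $\bm{h}=\bm{q}$. This is exactly where sparsification enters: combining $\bm{A}^{-}\leq\Delta\bm{q}\bm{p}^{-}$ with $\bm{p}^{-}\bm{p}=\mathbb{1}$ (valid since $\bm{p}$ is nonzero) gives $\Delta^{-1}\bm{A}^{-}\bm{p}\leq\Delta^{-1}\Delta\bm{q}\bm{p}^{-}\bm{p}=\bm{q}$, as needed. Since $\bm{q}$ is regular, Lemma~\ref{L-alphagxalphah} then yields at once the equivalence of \eqref{I-alphadelta1Apxq} and \eqref{E-xIdelta1Apqu}; moreover, the form \eqref{E-xIdelta1Apqu} shows each such $\bm{x}$ is regular, because the matrix $\bm{I}\oplus\Delta^{-1}\bm{A}^{-}\bm{p}\bm{q}^{-}$ dominates $\bm{I}$ (so is row-regular) and acts on a regular $\bm{u}$.

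It then remains to confirm that a vector $\bm{x}$ satisfying \eqref{I-alphadelta1Apxq} meets system \eqref{E-qxalpha-I-AxalphaDeltap}. Setting $\alpha^{\prime}=\bm{q}^{-}\bm{x}$, the right inequality $\bm{x}\leq\alpha\bm{q}$ together with $\bm{q}^{-}\bm{q}=\mathbb{1}$ gives $\alpha^{\prime}\leq\alpha$, while regularity gives $\alpha^{\prime}>\mathbb{0}$; left-multiplying the left inequality $\bm{x}\geq\alpha\Delta^{-1}\bm{A}^{-}\bm{p}$ by $\bm{A}$ and using $\bm{A}\bm{A}^{-}\geq\bm{I}$ gives $\bm{A}\bm{x}\geq\alpha\Delta^{-1}\bm{p}\geq\alpha^{\prime}\Delta^{-1}\bm{p}$. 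Thus $\bm{x}$ obeys \eqref{E-qxalpha-I-AxalphaDeltap} with parameter $\alpha^{\prime}$, and Lemma~\ref{L-minqxAxp} certifies it as a regular minimizer. I expect the main obstacle to be precisely the order relation $\bm{g}\leq\bm{h}$: it is the property the sparsification is designed to enforce, and without it the lower bound in \eqref{I-alphadelta1Apxq} could exceed $\alpha\bm{q}$, so that the representation lemma would fail to apply; the remaining manipulations of conjugate transposition are routine.
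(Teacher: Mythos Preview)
Your proposal is correct and follows essentially the same route as the paper: invoke Lemma~\ref{L-minqxAxp} for the minimum value and the characterizing system \eqref{E-qxalpha-I-AxalphaDeltap}, use the sparsification bound $\bm{A}^{-}\leq\Delta\bm{q}\bm{p}^{-}$ to obtain $\Delta^{-1}\bm{A}^{-}\bm{p}\leq\bm{q}$, verify that any $\bm{x}$ obeying \eqref{I-alphadelta1Apxq} satisfies \eqref{E-qxalpha-I-AxalphaDeltap} via $\bm{A}\bm{A}^{-}\geq\bm{I}$, and finish with Lemma~\ref{L-alphagxalphah}. The only cosmetic difference is that the paper shows $\bm{q}^{-}\bm{x}=\alpha$ for the \emph{same} $\alpha$ (using the inequality $\bm{q}^{-}\bm{A}^{-}\geq(\bm{A}\bm{q})^{-}$ to squeeze $\bm{q}^{-}\bm{x}$ between $\alpha$ and $\alpha$), whereas you set $\alpha^{\prime}=\bm{q}^{-}\bm{x}\leq\alpha$ and feed that parameter into \eqref{E-qxalpha-I-AxalphaDeltap}; both are valid and equally short.
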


\begin{proof}
It follows from Lemma~\ref{L-minqxAxp} and Lemma~\ref{L-Sparse} that the minimum value, given by $\Delta=(\bm{A}\bm{q})^{-}\bm{p}$, and the regular solutions do not change after sparsification.

Considering that, by Lemma~\ref{L-minqxAxp}, all regular solutions are defined by system \eqref{E-qxalpha-I-AxalphaDeltap}, we need to show that each vector $\bm{x}$, which satisfies \eqref{I-alphadelta1Apxq}, also solves \eqref{E-qxalpha-I-AxalphaDeltap}.

Note that the set of vectors given by inequality \eqref{I-alphadelta1Apxq} is not empty. Indeed, as the matrix $\bm{A}$ is sparsified, the inequality $\bm{A}^{-}\leq\Delta\bm{q}\bm{p}^{-}$ holds. Consequently, we obtain $\Delta^{-1}\bm{A}^{-}\bm{p}\leq\Delta^{-1}\Delta\bm{q}\bm{p}^{-}\bm{p}=\bm{q}$, which results in $\alpha\Delta^{-1}\bm{A}^{-}\bm{p}\leq\alpha\bm{q}$.

By using properties of conjugate transposition, we have $(\bm{A}\bm{q}\bm{q}^{-})^{-}=\bm{q}(\bm{A}\bm{q})^{-}$ and $\bm{q}\bm{q}^{-}\geq\bm{I}$. Then, we write $\bm{q}^{-}\bm{A}^{-}\geq\bm{q}^{-}(\bm{A}\bm{q}\bm{q}^{-})^{-}=\bm{q}^{-}\bm{q}(\bm{A}\bm{q})^{-}=(\bm{A}\bm{q})^{-}$. After left multiplication of \eqref{I-alphadelta1Apxq} by $\bm{q}^{-}$, we obtain
$$
\alpha
=
\alpha\Delta^{-1}(\bm{A}\bm{q})^{-}\bm{p}
\leq
\alpha\Delta^{-1}\bm{q}^{-}\bm{A}^{-}\bm{p}
\leq
\bm{q}^{-}\bm{x}
\leq
\alpha\bm{q}^{-}\bm{q}
=
\alpha,
$$
and thus arrive at the first equality at \eqref{E-qxalpha-I-AxalphaDeltap}.

In addition, it follows from the row regularity of $\bm{A}$ and the left inequality in \eqref{I-alphadelta1Apxq} that $\bm{A}\bm{x}\geq\alpha\Delta^{-1}\bm{A}\bm{A}^{-}\bm{p}\geq\alpha\Delta^{-1}\bm{p}$, which gives the second inequality at \eqref{E-qxalpha-I-AxalphaDeltap}.

Finally, application of Lemma~\ref{L-alphagxalphah} provides the representation of the solution in the form of \eqref{E-xIdelta1Apqu}, which completes the proof.
%\qed
\end{proof}

\begin{example}
\label{X-Apq}
As an illustration, we examine problem \eqref{P-minqxAxp}, where $m=n=2$, in the framework of the semifield $\mathbb{R}_{\max,+}$ with the matrix and vectors given by
$$
\bm{A}
=
\left(
\begin{array}{cc}
2 & 0
\\
4 & 1
\end{array}
\right),
\quad
\bm{p}
=
\left(
\begin{array}{c}
5
\\
2
\end{array}
\right),
\quad
\bm{q}
=
\left(
\begin{array}{c}
1
\\
2
\end{array}
\right).
$$

We start with the evaluation of the minimum value by calculating
$$
\bm{A}\bm{q}
=
\left(
\begin{array}{c}
3
\\
5
\end{array}
\right),
\quad
\Delta
=
(\bm{A}\bm{q})^{-}\bm{p}
=
2.
$$

Next, we find the threshold and sparsified matrices. With $\mathbb{0}=-\infty$, we write
$$
\Delta^{-1}\bm{p}\bm{q}^{-}
=
\left(
\begin{array}{rr}
2 & 1
\\
-1 & -2
\end{array}
\right),
\quad
\widehat{\bm{A}}
=
\left(
\begin{array}{cc}
2 & \mathbb{0}
\\
4 & 1
\end{array}
\right),
\quad
\Delta^{-1}\widehat{\bm{A}}^{-}\bm{p}\bm{q}^{-}
=
\left(
\begin{array}{rr}
0 & -1
\\
-2 & -3
\end{array}
\right).
$$

The solution given by \eqref{I-alphadelta1Apxq} is represented as follows:
$$
\alpha\bm{x}^{\prime}
\leq
\bm{x}
\leq
\alpha\bm{x}^{\prime\prime},
\quad
\bm{x}^{\prime}
=
\Delta^{-1}\widehat{\bm{A}}^{-}\bm{p}
=
\left(
\begin{array}{r}
1
\\
-1
\end{array}
\right),
\quad
\bm{x}^{\prime\prime}
=
\bm{q}
=
\left(
\begin{array}{c}
1
\\
2
\end{array}
\right),
\quad
\alpha\in\mathbb{R}.
$$

By applying \eqref{E-xIdelta1Apqu}, we obtain the solution in the alternative form
$$
\bm{x}
=
\bm{S}
\bm{u},
\quad
\bm{S}
=
\bm{I}
\oplus
\Delta^{-1}\widehat{\bm{A}}^{-}\bm{p}\bm{q}^{-}
=
\left(
\begin{array}{rr}
0 & -1
\\
-2 & 0
\end{array}
\right),
\quad
\bm{u}\in\mathbb{R}^{2}.
$$

A graphical illustration of the solution is given in Fig.~\ref{F-PECS}, which shows both the known partial solution by Lemma~\ref{L-minqxAxp} (left), and the new extended solution provided by Theorem~\ref{T-minqxAxp0} (middle). In the left picture, the solution is depicted as a thick line drawn through the end point of the vector $\bm{q}$ at $45^{\circ}$ to the axes.

The extended solution in the middle is represented by a strip between two hatched thick lines, which includes the previous solution as the upper boundary. Due to \eqref{I-alphadelta1Apxq}, this strip is drawn as the area covered when the vertical segment between the ends of the vectors $\bm{x}^{\prime}$ and $\bm{x}^{\prime\prime}$ shifts at $45^{\circ}$ to the axes. Solution \eqref{E-xIdelta1Apqu} is depicted as the linear span of columns in the matrix $\bm{S}=(\bm{s}_{1},\bm{s}_{2})$.
\begin{figure}[ht]
\setlength{\unitlength}{1mm}
%\begin{center}
\begin{picture}(30,45)

\put(0,20){\vector(1,0){30}}
\put(15,0){\vector(0,1){45}}

%%%%%%%

\put(15,20){\thicklines\vector(1,2){8}}

\put(2.9,16){\thicklines\line(1,1){23}}
\put(3.1,16){\thicklines\line(1,1){23}}

\put(15,20){\thicklines\vector(-3,2){4.75}}

\put(26,35){$\bm{q}$}

\put(7,25){$\bm{x}$}

%%%%%%%

\end{picture}
\hspace{8\unitlength}
\begin{picture}(37,45)

\put(0,20){\vector(1,0){37}}
\put(15,0){\vector(0,1){45}}

%%%%%%%

\put(15,20){\thicklines\vector(1,2){8}}

\put(2.9,16){\thicklines\line(1,1){23}}
\put(3.1,16){\thicklines\line(1,1){23}}
\multiput(3,16)(1,1){23}{\line(1,0){1}}

\put(15,20){\thicklines\vector(1,-1){8}}

\put(11.9,1){\thicklines\line(1,1){23}}
\put(12.1,1){\thicklines\line(1,1){23}}
\multiput(12,1)(1,1){23}{\line(-1,0){1}}

\put(15,20){\thicklines\vector(0,-1){16}}

\put(15,20){\thicklines\vector(-1,0){8}}

\put(15,20){\thicklines\vector(3,2){10}}

\put(25,34){$\bm{x}^{\prime\prime}=\bm{q}$}

\put(25,9){$\bm{x}^{\prime}$}

\put(9,5){$\bm{s}_{1}$}
\put(3,22){$\bm{s}_{2}$}

\put(26,27){$\bm{x}$}

%%%%%%%

\end{picture}
\hspace{8\unitlength}
\begin{picture}(35,45)

\put(0,20){\vector(1,0){35}}
\put(15,20){\vector(0,1){25}}

%%%%%%%

\put(15,20){\thicklines\vector(1,2){8}}

\put(2.9,16){\thicklines\line(1,1){23}}
\put(3.1,16){\thicklines\line(1,1){23}}
\multiput(3,16)(1,1){23}{\line(1,0){1}}

\put(15,20){\thicklines\vector(2,-1){8}}

\put(15,20){\thicklines\line(0,-1){10}}
\multiput(15,8)(0,-2){3}{\circle*{1}}
\put(15,2){\thicklines\vector(0,-1){2}}

\put(15,20){\thicklines\vector(-1,0){8}}

\put(9,0){$\bm{s}_{1}$}
\put(3,22){$\bm{s}_{2}$}

\put(25,13){$\bm{x}$}

%%%%%%%

\end{picture}
%\end{center}
%\caption{Partial (left), extended (middle), and complete (right) solutions to the example problem.}
\caption{Partial (left), extended (middle), and complete (right) solutions.}
\label{F-PECS}
\end{figure}
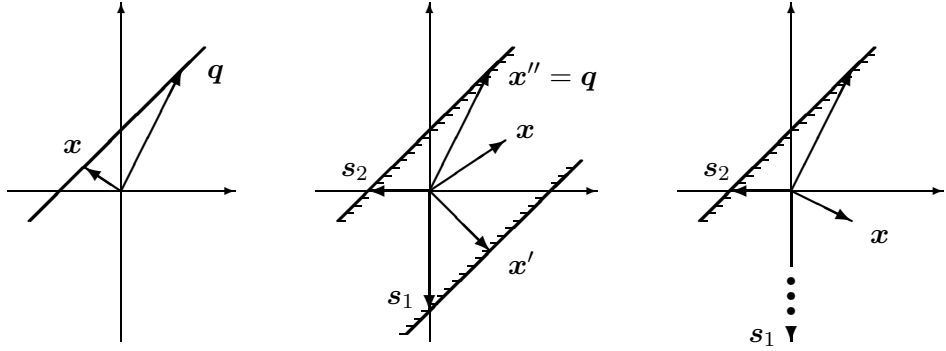
\end{example}

\subsection{Complete solution}

We are now in a position to derive a complete solution to the problem. The next result offers a simple way to describe all solutions to problem \eqref{P-minqxAxp}. 

\begin{theorem}
\label{T-minqxAxp1}
Let $\bm{A}$ be a row-regular sparsified matrix for problem \eqref{P-minqxAxp} with a nonzero vector $\bm{p}$ and a regular vector $\bm{q}$. Let $\mathcal{A}$ be the set of matrices obtained from $\bm{A}$ by fixing one nonzero entry in each row and setting the other ones to $\mathbb{0}$.

Then, the minimum value in \eqref{P-minqxAxp} is equal to $\Delta=(\bm{A}\bm{q})^{-}\bm{p}$, and all regular solutions $\bm{x}$ are given by the conditions 
\begin{equation}
\alpha\Delta^{-1}\bm{A}_{1}^{-}\bm{p}
\leq
\bm{x}
\leq
\alpha\bm{q},
\quad
\alpha
>
\mathbb{0},
\quad
\bm{A}_{1}\in\mathcal{A};
\label{I-alphaDelta1A1pxalphaq}
\end{equation}
or, equivalently, by the conditions
\begin{equation}
\bm{x}
=
(\bm{I}\oplus\Delta^{-1}\bm{A}_{1}^{-}\bm{p}\bm{q}^{-})\bm{u},
\quad
\bm{u}
>
\bm{0},
\quad
\bm{A}_{1}\in\mathcal{A}.
\label{E-xIA1pqu}
\end{equation}
\end{theorem}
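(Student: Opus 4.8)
The plan is to recognize the right-hand side of the claim as a union, over $\bm{A}_1\in\mathcal{A}$, of the solution subsets supplied by Theorem~\ref{T-minqxAxp0}, and to show that this union exhausts the full solution set characterized in Lemma~\ref{L-minqxAxp} by system \eqref{E-qxalpha-I-AxalphaDeltap}. The value $\Delta=(\bm{A}\bm{q})^{-}\bm{p}$ is already the minimum by Lemma~\ref{L-minqxAxp}, so only the description of the solution set is at issue, and I would establish it by two opposite inclusions. As a preliminary step I would check that every $\bm{A}_1\in\mathcal{A}$ is itself a row-regular sparsified matrix carrying the same threshold $\Delta$: it is row-regular since each of its rows keeps a single nonzero entry, and each such surviving entry $a_{ij}$ obeys $a_{ij}\geq\Delta^{-1}p_{i}q_{j}^{-1}$ by construction of $\bm{A}$, so the sparsification procedure \eqref{E-aijprime} leaves $\bm{A}_1$ unchanged. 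The equality of thresholds follows by squeezing: $\bm{A}_1\leq\bm{A}$ gives $(\bm{A}_1\bm{q})^{-}\bm{p}\geq\Delta$, while the surviving-entry inequalities $a_{ij}q_{j}\geq\Delta^{-1}p_{i}$ give $(\bm{A}_1\bm{q})^{-}\bm{p}\leq\Delta$.

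For the inclusion that \eqref{I-alphaDelta1A1pxalphaq} produces only solutions, I would fix $\bm{A}_1\in\mathcal{A}$ and apply Theorem~\ref{T-minqxAxp0} to $\bm{A}_1$, which is licensed by the preliminary step. It yields $\bm{q}^{-}\bm{x}=\alpha$ together with $\bm{A}_1\bm{x}\geq\alpha\Delta^{-1}\bm{p}$; since $\bm{A}\geq\bm{A}_1$ and multiplication is isotone, $\bm{A}\bm{x}\geq\bm{A}_1\bm{x}\geq\alpha\Delta^{-1}\bm{p}$, so $\bm{x}$ satisfies \eqref{E-qxalpha-I-AxalphaDeltap} and hence solves \eqref{P-minqxAxp}.

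The reverse inclusion is the crux and the step I expect to be the main obstacle. Starting from a regular $\bm{x}$ obeying \eqref{E-qxalpha-I-AxalphaDeltap} with $\alpha=\bm{q}^{-}\bm{x}$, I would build an $\bm{A}_1\in\mathcal{A}$ tailored to $\bm{x}$. The upper bound $\bm{x}\leq\alpha\bm{q}$ is immediate from $\bm{q}^{-}\bm{x}=\alpha$ via Lemma~\ref{L-Ax-d}. For the lower bound, I would read the row inequalities $\bigoplus_{j}a_{ij}x_{j}\geq\alpha\Delta^{-1}p_{i}$ off \eqref{E-qxalpha-I-AxalphaDeltap}; because the order is total, each such maximum is attained, so in every row $i$ one can select a column $j(i)$ with $a_{i,j(i)}\ne\mathbb{0}$ and $a_{i,j(i)}x_{j(i)}\geq\alpha\Delta^{-1}p_{i}$ (when $p_{i}=\mathbb{0}$ any surviving entry serves, available by row regularity). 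Defining $\bm{A}_1$ by retaining exactly these entries places $\bm{A}_1\in\mathcal{A}$, and since $(\bm{A}_1^{-}\bm{p})_{j}=\bigoplus_{i:\,j(i)=j}a_{ij}^{-1}p_{i}$, the selection inequalities rearrange to $x_{j}\geq\alpha\Delta^{-1}a_{ij}^{-1}p_{i}$ for each contributing $i$, and taking the supremum over such $i$ yields $\bm{x}\geq\alpha\Delta^{-1}\bm{A}_1^{-}\bm{p}$. The delicate point is precisely this transition from the scalar row conditions to the vector lower bound: a single per-row choice must simultaneously certify all components of the bound, and the bookkeeping for rows with $p_{i}=\mathbb{0}$ and for columns selected by no row (where the bound is vacuous) has to be handled.

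Finally, the passage between \eqref{I-alphaDelta1A1pxalphaq} and \eqref{E-xIA1pqu} I would obtain from the representation Lemma~\ref{L-alphagxalphah} with $\bm{g}=\Delta^{-1}\bm{A}_1^{-}\bm{p}$ and $\bm{h}=\bm{q}$; its hypothesis $\bm{g}\leq\bm{h}$ is the inequality $\Delta^{-1}\bm{A}_1^{-}\bm{p}\leq\bm{q}$, which follows from the surviving-entry bounds exactly as in the proof of Theorem~\ref{T-minqxAxp0}, and $\bm{q}$ is regular as required.
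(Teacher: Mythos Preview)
Your proposal is correct and follows essentially the same route as the paper: two inclusions between the characterization \eqref{E-qxalpha-I-AxalphaDeltap} and the union over $\mathcal{A}$ of the sets \eqref{I-alphaDelta1A1pxalphaq}, with the tailored $\bm{A}_1$ built by selecting a maximizing entry in each row, followed by Lemma~\ref{L-alphagxalphah}. The only differences are cosmetic: for the direction \eqref{I-alphaDelta1A1pxalphaq}$\Rightarrow$\eqref{E-qxalpha-I-AxalphaDeltap} the paper argues directly from $\bm{A}\bm{A}_1^{-}\geq\bm{I}$ and $\bm{q}^{-}\bm{A}_1^{-}\geq(\bm{A}\bm{q})^{-}$ rather than routing through Theorem~\ref{T-minqxAxp0}, and for the reverse direction your component-wise bookkeeping is packaged in the paper as the single matrix inequality $\bm{A}_1^{-}\bm{A}_1\leq\bm{I}$ (valid because $\bm{A}_1$ has exactly one nonzero entry per row), which gives $\bm{x}\geq\bm{A}_1^{-}\bm{A}_1\bm{x}\geq\alpha\Delta^{-1}\bm{A}_1^{-}\bm{p}$ in one line and sidesteps the case analysis you flag as delicate.
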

\begin{proof}
It follows from Lemma~\ref{L-minqxAxp} that all solutions of problem \eqref{P-minqxAxp} are defined by system \eqref{E-qxalpha-I-AxalphaDeltap}. Therefore, to prove the theorem, we need to show that each solution of system \eqref{E-qxalpha-I-AxalphaDeltap} is a solution of \eqref{I-alphaDelta1A1pxalphaq} with some matrix $\bm{A}_{1}\in\mathcal{A}$, and vice versa.

Consider any matrix $\bm{A}_{1}\in\mathcal{A}$, and note that it is row-regular. Moreover, the inequalities $\bm{A}_{1}\leq\bm{A}$ and $\bm{A}_{1}^{-}\leq\bm{A}^{-}$ hold. In the same way as in Theorem~\ref{T-minqxAxp0}, we see that, since $\bm{A}_{1}^{-}\leq\bm{A}^{-}\leq\Delta\bm{q}\bm{p}^{-}$, the double inequality at \eqref{I-alphaDelta1A1pxalphaq} has solutions.

Let $\bm{x}$ be a solution to system \eqref{E-qxalpha-I-AxalphaDeltap}. First, we take the inequality $\bm{A}\bm{x}\geq\alpha\Delta^{-1}\bm{p}$, and examine every corresponding scalar inequality to determine the maximal summand on the left-hand side. Clearly, there is a matrix $\bm{A}_{1}\in\mathcal{A}$ with nonzero entries that are located in each row to match these maximal summands. With this matrix, the inequality can be replaced by $\bm{A}_{1}\bm{x}\geq\alpha\Delta^{-1}\bm{p}$ without loss of solution. At the same time, the matrix $\bm{A}_{1}$ has exactly one nonzero entry in each row, and thus obeys the inequality $\bm{A}_{1}^{-}\bm{A}_{1}\leq\bm{I}$. After right multiplication by $\bm{x}$, we obtain $\bm{x}\geq\bm{A}_{1}^{-}\bm{A}_{1}\bm{x}\geq\alpha\Delta^{-1}\bm{A}_{1}^{-}\bm{p}$, which gives the left inequality in \eqref{I-alphaDelta1A1pxalphaq}. The right inequality in \eqref{I-alphaDelta1A1pxalphaq} directly follows from the equality $\bm{q}^{-}\bm{x}=\alpha$ at \eqref{E-qxalpha-I-AxalphaDeltap}.

Next, we suppose that a vector $\bm{x}$ satisfies \eqref{I-alphaDelta1A1pxalphaq} with some matrix $\bm{A}_{1}\in\mathcal{A}$, and verify that $\bm{x}$ also solves system \eqref{E-qxalpha-I-AxalphaDeltap}. By using the same argument as in Theorem~\ref{T-minqxAxp0}, we have $\bm{q}^{-}\bm{A}_{1}^{-}\geq(\bm{A}_{1}\bm{q})^{-}\geq(\bm{A}\bm{q})^{-}$, and then obtain the equality at \eqref{E-qxalpha-I-AxalphaDeltap}. Considering that $\bm{A}\bm{A}_{1}^{-}\geq\bm{I}$, we take the left inequality at \eqref{I-alphaDelta1A1pxalphaq} to write $\bm{A}\bm{x}\geq\alpha\Delta^{-1}\bm{A}\bm{A}_{1}^{-}\bm{p}\geq\alpha\Delta^{-1}\bm{p}$, which yields the inequality at \eqref{E-qxalpha-I-AxalphaDeltap}. An application of Lemma~\ref{L-alphagxalphah} completes the proof.
%\qed
\end{proof}

Note that the solution sets defined by different matrices from the set $\mathcal{A}$ in Theorem~\ref{T-minqxAxp1} can have nonempty intersection, as shown in the next example.

\begin{example}
\label{X-Apq-S}
Suppose that the matrix in Example~\ref{X-Apq} is replaced by its sparsified matrix, and consider the problem with
$$
\bm{A}
=
\left(
\begin{array}{cc}
2 & \mathbb{0}
\\
4 & 1
\end{array}
\right),
\quad
\bm{p}
=
\left(
\begin{array}{c}
5
\\
2
\end{array}
\right),
\quad
\bm{q}
=
\left(
\begin{array}{c}
1
\\
2
\end{array}
\right).
$$

Since the sparsification of the matrix does not change the minimum in the problem, we still have $\Delta=(\bm{A}\bm{q})^{-}\bm{p}=2$.

Consider the set $\mathcal{A}$, which is formed of the matrices obtained from $\bm{A}$ by keeping only one nonzero entry in each row. This set consists of two matrices
$$
\bm{A}_{1}
=
\left(
\begin{array}{cc}
2 & \mathbb{0}
\\
4 & \mathbb{0}
\end{array}
\right),
\quad
\bm{A}_{2}
=
\left(
\begin{array}{cc}
2 & \mathbb{0}
\\
\mathbb{0} & 1
\end{array}
\right).
$$

Let us write the solutions defined by these matrices in the form of \eqref{E-xIA1pqu}. First, we calculate the matrices
$$
\Delta^{-1}\bm{A}_{1}^{-}\bm{p}\bm{q}^{-}
=
\left(
\begin{array}{rr}
0 & -1
\\
\mathbb{0} & \mathbb{0}
\end{array}
\right),
\quad
\Delta^{-1}\bm{A}_{2}^{-}\bm{p}\bm{q}^{-}
=
\left(
\begin{array}{rr}
0 & -1
\\
-2 & -3
\end{array}
\right).
$$

Using the first matrix yields the solution
$$
\bm{x}
=
\bm{S}_{1}
\bm{u},
\quad
\bm{S}_{1}
=
\bm{I}
\oplus
\Delta^{-1}\bm{A}_{1}^{-}\bm{p}\bm{q}^{-}
=
\left(
\begin{array}{rr}
0 & -1
\\
\mathbb{0} & 0
\end{array}
\right),
\quad
\bm{u}\in\mathbb{R}^{2}.
$$

The second solution coincides with that obtained in Example~\ref{X-Apq} in the form
$$
\bm{x}
=
\bm{S}_{2}
\bm{u},
\quad
\bm{S}_{2}
=
\bm{I}
\oplus
\Delta^{-1}\bm{A}_{2}^{-}\bm{p}\bm{q}^{-}
=
\left(
\begin{array}{rr}
0 & -1
\\
-2 & 0
\end{array}
\right),
\quad
\bm{u}\in\mathbb{R}^{2}.
$$

The first solution is displayed in Fig.~\ref{F-PECS} (right) as the half-plane below the thick hatched line. Clearly, this area completely covers the strip region in Fig.~\ref{F-PECS} (middle), offered by the extended solution.
\end{example}

\subsection{Backtracking procedure for generating solutions}

Consider a backtracking search procedure that finds all solutions to problem \eqref{P-minqxAxp} with the sparsified matrix $\bm{A}$ in an economical way. To generate all matrices in $\mathcal{A}$, the procedure examines each row in the matrix $\bm{A}$ to fix one nonzero entry in the row, and to set the other entries to zeros. After selecting a nonzero entry in the current row, the subsequent rows are modified to reduce the number of remaining alternatives. Then, a nonzero entry in the next row of the modified matrix is fixed if any exists, and the procedure continues repeatedly.

Suppose that every row of the modified matrix has exactly one nonzero entry. This matrix is considered as a solution matrix $\bm{A}_{1}\in\mathcal{A}$, and stored in a solution list. Furthermore, if the modified matrix has zero rows, it does not provide a solution. In either case, the procedure returns to roll back all last modifications, and to fix the next nonzero entry in the current row if there is any, or goes back to the previous row otherwise. The procedure is completed when no more nonzero entries in the first row of the matrix $\bm{A}$ left to select. 

To describe the technique used to reduce search, suppose that the procedure, which has fixed one nonzero entry in each of the rows $1,\ldots,i-1$, currently selects a nonzero entry in row $i$ of the modified matrix $\widetilde{\bm{A}}$, say the entry $\widetilde{a}_{ij}$ in column $j$, whereas the other entries in the row are set to zero. 

Any solution vector $\bm{x}$ must satisfy the inequality $\widetilde{\bm{A}}\bm{x}\geq\alpha\Delta^{-1}\bm{p}$ in system \eqref{E-qxalpha-I-AxalphaDeltap}. Specifically, the scalar inequality for row $i$, where only the entry $\widetilde{a}_{ij}$ is nonzero, reads $\widetilde{a}_{ij}x_{j}\geq\alpha\Delta^{-1}p_{i}$, or, equivalently, $x_{j}\geq\alpha\Delta^{-1}\widetilde{a}_{ij}^{-1}p_{i}$. If $p_{i}>\mathbb{0}$, then the inequality determines a lower bound for $x_{j}$ in the solution under construction.

Assuming $p_{i}>\mathbb{0}$, consider the entries of column $j$ in rows $k=i+1,\ldots,n$. Provided that the condition $\widetilde{a}_{kj}\geq\widetilde{a}_{ij}p_{i}^{-1}p_{k}$ is satisfied for row $k$, we write $\widetilde{a}_{kj}x_{j}\geq\alpha\Delta^{-1}\widetilde{a}_{ij}p_{i}^{-1}p_{k}\widetilde{a}_{ij}^{-1}p_{i}\geq\alpha\Delta^{-1}p_{k}$, which means that the inequality at \eqref{E-qxalpha-I-AxalphaDeltap} for this row is valid regardless of $x_{l}$ for $l\ne j$. In this case, further examination of nonzero entries $\widetilde{a}_{kl}$ in row $k$ cannot impose new constraints on the element $x_{l}$ in the vector $\bm{x}$, and thus is not needed. These entries can be set to zeros without affecting the inequality, which may decrease the number of search alternatives.

\begin{example}
\label{X-Apq-B}
As a simple illustration of the technique, we return to Example~\ref{X-Apq-S}, where the initial sparsified matrix and its further sparsifications are given by
$$
\bm{A}
=
\left(
\begin{array}{cc}
2 & \mathbb{0}
\\
4 & 1
\end{array}
\right),
\quad
\bm{A}_{1}
=
\left(
\begin{array}{cc}
2 & \mathbb{0}
\\
4 & \mathbb{0}
\end{array}
\right),
\quad
\bm{A}_{2}
=
\left(
\begin{array}{cc}
2 & \mathbb{0}
\\
\mathbb{0} & 1
\end{array}
\right).
$$  

The procedure first fixes the entry $a_{11}=2$. Since $a_{21}=4$ is greater than $a_{11}p_{1}^{-1}p_{2}=-1$, the procedure sets $a_{22}$ to $\mathbb{0}$, which immediately excludes the matrix $\bm{A}_{2}$ from further consideration, and hence reduces the analysis to $\bm{A}_{1}$.
\end{example}

\subsection{Representation of complete solution in closed form}

A complete solution to problem~\eqref{P-minqxAxp} can be expressed in a closed form as follows.

\begin{theorem}
\label{T-minqxAxp}
Let $\bm{A}$ be a row-regular sparsified matrix for problem \eqref{P-minqxAxp} with a nonzero vector $\bm{p}$ and a regular vector $\bm{q}$. Denote by $\mathcal{A}$ the set of matrices obtained from $\bm{A}$ by fixing one nonzero entry in each row, and setting the other ones to $\mathbb{0}$.

Let $\bm{S}$ be the matrix, which is formed by putting together all columns of the matrices $\bm{S}_{1}=\bm{I}\oplus\Delta^{-1}\bm{A}_{1}^{-}\bm{p}\bm{q}^{-}$ for every $\bm{A}_{1}\in\mathcal{A}$, and $\bm{S}_{0}$ be a matrix whose columns comprise a maximal linear independent system of the columns in $\bm{S}$. 

Then, the minimum value in \eqref{P-minqxAxp} is equal to $\Delta=(\bm{A}\bm{q})^{-}\bm{p}$, and all regular solutions are given by
$$
\bm{x}
=
\bm{S}_{0}\bm{v},
\quad
\bm{v}
>
\bm{0}.
$$
\end{theorem}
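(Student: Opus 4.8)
The plan is to collapse the family description of Theorem~\ref{T-minqxAxp1} into a single generated set and then thin out that generating set to $\bm{S}_{0}$. By Theorem~\ref{T-minqxAxp1} the minimum value is already $\Delta=(\bm{A}\bm{q})^{-}\bm{p}$, and the regular solutions form the finite union $\mathcal{S}=\bigcup_{\bm{A}_{1}\in\mathcal{A}}\{\bm{S}_{1}\bm{u}:\bm{u}>\bm{0}\}$, where $\bm{S}_{1}=\bm{I}\oplus\Delta^{-1}\bm{A}_{1}^{-}\bm{p}\bm{q}^{-}$ and $\bm{S}$ concatenates the columns of all the $\bm{S}_{1}$. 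My goal is therefore to show, in two stages, that $\mathcal{S}=\{\bm{S}\bm{w}:\bm{w}>\bm{0}\}=\{\bm{S}_{0}\bm{v}:\bm{v}>\bm{0}\}$.

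First I would establish $\mathcal{S}=\{\bm{S}\bm{w}:\bm{w}>\bm{0}\}$. For the inclusion $\supseteq$, I would split any $\bm{S}\bm{w}$ with $\bm{w}>\bm{0}$ as $\bigoplus_{\bm{A}_{1}\in\mathcal{A}}\bm{S}_{1}\bm{w}_{1}$, where $\bm{w}_{1}>\bm{0}$ is the block of $\bm{w}$ indexing the columns of $\bm{S}_{1}$; each summand is a regular solution by representation~\eqref{E-xIA1pqu}, and since $\mathcal{A}$ is finite, Corollary~\ref{C-minqxAxp} applied repeatedly shows the sum is again a solution. For the inclusion $\subseteq$, I would take a solution $\bm{x}=\bm{S}_{1}\bm{u}$ with $\bm{u}>\bm{0}$ and construct $\bm{w}>\bm{0}$ that equals $\bm{u}$ on the block of $\bm{S}_{1}$ and carries sufficiently small positive entries on every remaining column $\bm{s}$: because $\bm{x}$ is regular one may pick each weight $w_{\bm{s}}>\mathbb{0}$ so small that $w_{\bm{s}}\bm{s}\leq\bm{x}$, whence those columns are absorbed in the idempotent sum and $\bm{S}\bm{w}=\bm{x}$ exactly.

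Next I would pass from $\bm{S}$ to $\bm{S}_{0}$. Since the columns of $\bm{S}_{0}$ form a maximal linearly independent subsystem of the columns of $\bm{S}$, the two systems are equivalent and share a common linear span, as recorded in the discussion of linear dependence. To transfer this to the strictly positive generating sets I would argue both inclusions: a discarded column $\bm{s}$ of $\bm{S}$ satisfies $\bm{s}=\bm{S}_{0}\bm{c}$ for some $\bm{c}$, so its contribution $w_{\bm{s}}\bm{s}=\bm{S}_{0}(w_{\bm{s}}\bm{c})$ merges into the $\bm{S}_{0}$ part, giving $\{\bm{S}\bm{w}:\bm{w}>\bm{0}\}\subseteq\{\bm{S}_{0}\bm{v}:\bm{v}>\bm{0}\}$; conversely, starting from $\bm{S}_{0}\bm{v}$ with $\bm{v}>\bm{0}$, I would reattach sufficiently small positive weights to the discarded columns so that they are again absorbed, yielding the reverse inclusion. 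Chaining the two equalities gives $\mathcal{S}=\{\bm{S}_{0}\bm{v}:\bm{v}>\bm{0}\}$, which is the assertion.

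The main obstacle will be the recurring \emph{exact-equality} step rather than mere equality of spans: I must keep every coefficient strictly positive while guaranteeing that the extra columns (from other blocks, or linearly dependent ones) contribute nothing to the maximum. The hinge is regularity of the solution vectors—every coordinate is nonzero—so that for any fixed column $\bm{s}$ a strictly positive scalar $w_{\bm{s}}$ with $w_{\bm{s}}\bm{s}\leq\bm{x}$ always exists and the column is dominated. A secondary point to verify is that $\{\bm{S}_{0}\bm{v}:\bm{v}>\bm{0}\}$ really consists of regular vectors: for $\bm{S}$ this is immediate because each $\bm{S}_{1}\geq\bm{I}$ supplies an entry $\mathbb{1}$ in every coordinate, and this regularity persists for $\bm{S}_{0}$ since its columns span the same set and hence cover every coordinate with a nonzero entry.
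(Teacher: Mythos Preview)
Your proposal is correct and follows essentially the same route as the paper: invoke Theorem~\ref{T-minqxAxp1} for the union description, use Corollary~\ref{C-minqxAxp} to show that $\{\bm{S}\bm{w}:\bm{w}>\bm{0}\}$ coincides with the solution set, and then pass to $\bm{S}_{0}$ via the common linear span. The only difference is that you are more explicit than the paper about the strict-positivity bookkeeping (padding with absorbed small weights and checking row-regularity of $\bm{S}_{0}$), which the paper leaves implicit.
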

\begin{proof}
Suppose that the set $\mathcal{A}$ consists of $k$ elements, which can be enumerated as $\bm{A}_{1},\ldots,\bm{A}_{k}$. For each $\bm{A}_{i}\in\mathcal{A}$, we define the matrix $\bm{S}_{i}=
\bm{I}\oplus\Delta^{-1}\bm{A}_{i}^{-}\bm{p}\bm{q}^{-}$.

First, note that, by Theorem~\ref{T-minqxAxp1}, the set of vectors $\bm{x}$ that solve problem \eqref{P-minqxAxp} is the union of subsets, each of which corresponds to one index $i=1,\ldots,k$, and contains the vectors given by $\bm{x}=\bm{S}_{i}\bm{u}_{i}$, where $\bm{u}_{i}>\bm{0}$ is a vector.

We now verify that all solutions to the problem can also be represented as
\begin{equation}
\bm{x}
=
\bm{S}_{1}\bm{u}_{1}\oplus\cdots\oplus\bm{S}_{k}\bm{u}_{k},
\quad
\bm{u}_{1},\ldots,\bm{u}_{k}>\bm{0}. 
\label{E-xB1u1Bkuk}
\end{equation}

Indeed, any solution provided by Theorem~\ref{T-minqxAxp1} can be written in the form of \eqref{E-xB1u1Bkuk}. At the same time, since the solution set is closed under vector addition and scalar multiplication by Corollary~\ref{C-minqxAxp}, any vector $\bm{x}$ given by \eqref{E-xB1u1Bkuk} solves the problem. Therefore, representation \eqref{E-xB1u1Bkuk} describes all solutions of the problem.

With the matrix $\bm{S}=(\bm{S}_{1},\ldots,\bm{S}_{k})$ and the vector $\bm{u}=(\bm{u}_{1}^{T},\ldots,\bm{u}_{k}^{T})^{T}$, we rewrite \eqref{E-xB1u1Bkuk} in the form
$$
\bm{x}
=
\bm{S}\bm{u},
\quad
\bm{u}
>
\bm{0},
$$
which specifies each solution to be a linear combination of columns in $\bm{S}$.

Clearly, elimination of a column that linearly depends on some others leaves the linear span of the columns unchanged. By eliminating all dependent columns, we reduce the matrix $\bm{S}$ to the matrix $\bm{S}_{0}$ to express any solution to the problem by a linear combination of columns in $\bm{S}_{0}$ as $\bm{x}=\bm{S}_{0}\bm{v}$, where $\bm{v}>\bm{0}$ is a vector of appropriate size.
%\qed
\end{proof}

\begin{example}
We again consider results of Example~\ref{X-Apq-S} to examine the matrices
$$
\bm{S}_{1}
=
\left(
\begin{array}{rr}
0 & -1
\\
\mathbb{0} & 0
\end{array}
\right),
\quad
\bm{S}_{2}
=
\left(
\begin{array}{rr}
0 & -1
\\
-2 & 0
\end{array}
\right).
$$

We take the dissimilar columns from $\bm{S}_{1}$ and $\bm{S}_{2}$, and denote them by
$$
\bm{s}_{1}
=
\left(
\begin{array}{r}
0
\\
\mathbb{0}
\end{array}
\right),
\quad
\bm{s}_{2}
=
\left(
\begin{array}{r}
-1
\\
0
\end{array}
\right),
\quad
\bm{s}_{3}
=
\left(
\begin{array}{r}
0
\\
-2
\end{array}
\right).
$$

Next, we put these columns together to form the matrix
$$
\bm{S}
=
\left(
\begin{array}{rrr}
\bm{s}_{1} & \bm{s}_{2} & \bm{s}_{3}
\end{array}
\right)
=
\left(
\begin{array}{rrr}
0 & -1 & 0
\\
\mathbb{0} & 0 & -2
\end{array}
\right).
$$

Furthermore, we examine the matrix $\bm{S}_{1}=(\bm{s}_{1},\bm{s}_{2})$ to calculate $\delta(\bm{S}_{1},\bm{s}_{3})$, and then to apply Lemma~\ref{E-AbAb1}. Since we have
$$
(\bm{s}_{3}^{-}\bm{S}_{1})^{-}
=
\bm{S}_{1}(\bm{s}_{3}^{-}\bm{S}_{1})^{-}
=
\left(
\begin{array}{r}
0
\\
-2
\end{array}
\right),
\quad
\delta(\bm{S}_{1},\bm{s}_{3})
=
(\bm{S}_{1}(\bm{s}_{3}^{-}\bm{S}_{1})^{-})^{-}\bm{s}_{3}
=
0
=
\mathbb{1},
$$ 
the column $\bm{s}_{3}$ is linearly dependent on the others, and thus can be removed.

Considering that the columns $\bm{s}_{1}$ and $\bm{s}_{2}$ are obviously not collinear, none of them can be further eliminated. With $\bm{S}_{0}=\bm{S}_{1}$, a complete solution to the problem is given by $\bm{x}=\bm{S}_{0}\bm{v}$, where $\bm{v}>\bm{0}$, and depicted in Fig.~\ref{F-PECS} (right).
\end{example}

\section{Solution to the maximization problem}
\label{S-SMaxP}

We now consider the maximization problem given by \eqref{P-maxqxAxp} and its solution offered by Lemma~\ref{L-maxqxAxp}. Note that the lemma represents the solution vectors by the conditions at \eqref{I-xkalphaakp} in scalar terms rather than in a vector form. Below, we show how the application of Lemma~\ref{L-alphagxalphah} and the use of sparsified matrices enable the transformation of the scalar solution into a compact vector form, similar to that of the above solution to the minimization problem.

The next result offers a vector representation of solution given by Lemma~\ref{L-maxqxAxp}.
\begin{theorem}
\label{T-maxqxAxp}
Let $\bm{A}=(\bm{a}_{j})$ be a matrix with regular columns $\bm{a}_{j}=(a_{ij})$, $\bm{p}=(p_{j})$ and $\bm{q}=(q_{j})$ be regular vectors. Let $\bm{A}_{sk}$ denote the matrix obtained from $\bm{A}$ by fixing the entry $a_{sk}$ for some indices $s$ and $k$, and replacing the other entries by $\mathbb{0}$.

Then, the maximum value in problem \eqref{P-maxqxAxp} is equal to $\Delta=\bm{q}^{-}\bm{A}^{-}\bm{p}$, and all regular solutions are given by
\begin{equation*}
\bm{x}
=
(\bm{I}\oplus\bm{A}_{sk}^{-}\bm{A})\bm{u},
\quad
\bm{u}>\bm{0},
\end{equation*}
for all indices $k$ and $s$ defined by the conditions
\begin{equation*}
k
=
\arg\max_{1\leq j\leq m}q_{j}^{-1}\bm{a}_{j}^{-}\bm{p},
\quad
s
=
\arg\max_{1\leq i\leq n}a_{ik}^{-1}p_{i}.
\end{equation*}
\end{theorem} 
\begin{proof}
Let us consider conditions \eqref{I-xkalphaakp} and \eqref{E-kqjajp-saikpi}, and note that $\bm{a}_{k}^{-}\bm{p}=a_{sk}^{-1}p_{s}$. Furthermore, we define the vector $\bm{g}=(g_{j})$ with the elements $g_{k}=a_{sk}^{-1}p_{s}$ and $g_{k}=\mathbb{0}$ for all $j\ne k$, and the vector $\bm{h}=(h_{j})$ with $h_{j}=a_{sj}^{-1}p_{s}$ for all $j$.

Now, condition \eqref{I-xkalphaakp} takes the form of the double inequality
$$
\alpha\bm{g}
\leq
\bm{x}
\leq
\alpha\bm{h},
\quad
\alpha
>
\mathbb{0},
$$
which, by Lemma~\ref{L-alphagxalphah}, can be equivalently represented as
$$
\bm{x}
=
(\bm{I}\oplus\bm{g}\bm{h}^{-})\bm{u},
\quad
\bm{u}
>
\bm{0}.
$$

It remains to see that $\bm{g}\bm{h}^{-}=\bm{A}_{sk}^{-}\bm{A}$, which completes the proof.
\end{proof}

\begin{example}
Let us apply the theorem to solve the maximization problem with the objective function defined as in Example~\ref{X-Apq}.

First, we have to evaluate the maximum $\Delta$ and determine the indices $k$ and $s$. We calculate
$$
\bm{a}_{1}^{-}\bm{p}
=
3,
\quad
\bm{a}_{2}^{-}\bm{p}
=
5,
\quad
q_{1}^{-1}\bm{a}_{1}^{-}\bm{p}
=
2,
\quad
q_{2}^{-1}\bm{a}_{2}^{-}\bm{p}
=
3,
$$
from which it follows that $\Delta=3$ and $k=2$. Next, we obtain
$$
a_{12}^{-1}p_{1}
=
5,
\quad
a_{22}^{-1}p_{2}
=
1,
$$
and thus conclude that $s=1$.

Furthermore, we calculate the matrices
$$
\bm{A}_{12}
=
\left(
\begin{array}{cc}
\mathbb{0} & 0
\\
\mathbb{0} & \mathbb{0}
\end{array}
\right),
\quad
\bm{A}_{12}^{-}\bm{A}
=
\left(
\begin{array}{cc}
\mathbb{0} & \mathbb{0}
\\
2 & 0
\end{array}
\right),
\quad
\bm{S}
=
\bm{I}\oplus\bm{A}_{12}^{-}\bm{A}
=
\left(
\begin{array}{cc}
0 & \mathbb{0}
\\
2 & 0
\end{array}
\right),
$$
and finally obtain the solution in the form $\bm{x}=\bm{S}\bm{u}$, where $\bm{u}>\bm{0}$.

Fig.~\ref{F-MMS} demonstrates this solution (left) together with the solution of the minimization problem obtained before (right). The obtained solution is generated by the columns of the matrix $\bm{S}=(\bm{s}_{1},\bm{s}_{2})$, and takes the form of the upper half-plane above the hatched thick line.
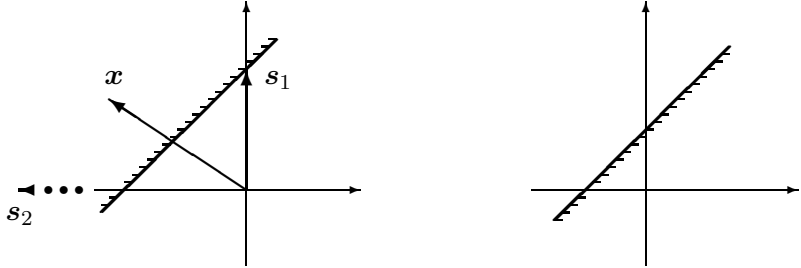
\begin{figure}[ht]
\setlength{\unitlength}{1mm}
\begin{center}
\begin{picture}(50,40)

\put(35,10){\vector(1,0){15}}
%\put(35,0){\vector(0,1){45}}
\put(35,10){\vector(0,1){25}}

%%%%%%%

\put(35,10){\thicklines\vector(0,1){16}}

\put(15.9,7){\thicklines\line(1,1){23}}
\put(16.1,7){\thicklines\line(1,1){23}}
\multiput(16,8)(1,1){23}{\line(1,0){1}}

\put(35,10){\line(-1,0){20}}
\multiput(13,10)(-2,0){3}{\circle*{1}}
\put(7,10){\thicklines\vector(-1,0){2}}

\put(35,10){\line(0,-1){10}}

\put(35,10){\thicklines\vector(-3,2){18}}

\put(37,24){$\bm{s}_{1}$}
\put(3,6){$\bm{s}_{2}$}

\put(16,24){$\bm{x}$}

%%%%%%%

\end{picture}
\hspace{20\unitlength}
\begin{picture}(35,40)

\put(0,10){\vector(1,0){35}}
\put(15,10){\vector(0,1){25}}
%\put(15,0){\vector(0,1){45}}

%%%%%%%

%\put(15,20){\thicklines\vector(1,2){8}}

\put(2.9,6){\thicklines\line(1,1){23}}
\put(3.1,6){\thicklines\line(1,1){23}}
\multiput(3,6)(1,1){23}{\line(1,0){1}}

%\put(15,20){\thicklines\vector(2,-1){8}}

\put(15,10){\line(0,-1){10}}

%\multiput(15,8)(0,-2){3}{\circle*{1}}
%\put(15,2){\thicklines\vector(0,-1){2}}

%\put(15,20){\thicklines\vector(-1,0){8}}

%\put(9,0){$\bm{s}_{1}$}
%\put(3,22){$\bm{s}_{2}$}

%\put(25,13){$\bm{x}$}

%%%%%%%

\end{picture}

\end{center}
\caption{Solutions of maximization (left) and minimization (right) problems.}
\label{F-MMS}
\end{figure}
\end{example}

\section{Application to scheduling problems}
\label{S-ASP}

The aim of this section is to apply the results obtained to solve real-world problems from project (machine) scheduling, which serve motivational and illustrative purposes for the study. We start with the description of a general time-constrained scheduling model in the usual notation, and formulate example problems to find optimal schedules under given temporal constraints. The problems arise in just-in-time and limited-resource scheduling, and find applications in the analysis of real-world processes with time synchronization in manufacturing, transportation networks, and parallel data processing (see, e.g. \cite{Demeulemeester2002Project,Neumann2003Project,Tkindt2006Multicriteria,Vanhoucke2012Project} for further details and applications).

We represent the scheduling problems in terms of tropical algebra, and then use the previous results to obtain complete solutions to the problems. A simple but representative numerical example illustrates the results.

\subsection{Model description and problem formulation}

Consider a project that consists of $n$ activities (jobs, tasks) operating in parallel under the temporal constraints in the form of start-start, start-finish and finish-start precedence relations, and finish deadlines. Two problems are of interest: first, to develop a schedule, in which all activities finish as much simultaneously as possible, and second, to find a schedule, which spreads the finish times of activities over time as much as possible.

To describe the temporal constraints for each activity $i=1,\ldots,n$ and the scheduling objective, we denote by $x_{i}$ and $y_{i}$, respectively, the start and finish times to be scheduled. Let $a_{ij}$ be the minimum possible time lag between the start of activity $j$ and the finish of $i$. If this time lag is not defined in the project, we assume $a_{ij}=-\infty$. The start-finish constraints of activity $i$ are given by the inequalities $a_{ij}+x_{j}\leq y_{i}$ for all $j=1,\ldots,n$.

We assume each activity to finish as soon as all its start-finish constraints are satisfied, and therefore, at least one of these inequalities must hold as an equality. As a result, we can combine the inequalities into one equality
\begin{equation*}
\max_{1\leq j\leq n}(a_{ij}+x_{j})
=
y_{i}.
\end{equation*}

Furthermore, we denote by $b_{ij}$ the minimum time lag between the start of activity $j$ and the start of $i$, and put $b_{ij}=-\infty$ if this lag is not specified. The start-start constraints are given for all $j$ by the inequalities $b_{ij}+x_{j}\leq x_{i}$, which are equivalent to one inequality
\begin{equation*}
\max_{1\leq j\leq n}(b_{ij}+x_{j})
\leq
x_{i}.
\end{equation*}

Denote by $c_{ij}$ the minimum time lag between the finish of activity $j$ and the start of $i$, and set it to $-\infty$ if undefined. The finish-start constraints take the form of the inequalities $c_{ij}+y_{j}\leq x_{i}$ for all $j$, or of one inequality
\begin{equation*}
\max_{1\leq j\leq n}(c_{ij}+y_{j})
\leq
x_{i}.
\end{equation*}

Let $f_{i}$ be the deadline for activity $i$, which specifies the latest possible time to finish. The deadline provides an upper boundary for the finish time, which is given by
\begin{equation*}
y_{i}
\leq
f_{i}.
\end{equation*}

We now define the scheduling objectives to minimize and maximize the maximum deviation between the finish times of the activities. The maximum deviation of the finish times is given by
$$
\max_{1\leq i\leq n}y_{i}
+
\min_{1\leq i\leq n}y_{i}
=
\max_{1\leq i\leq n}y_{i}
+
\max_{1\leq i\leq n}(-y_{i}).
$$

The problem of minimizing the maximum deviation of finish times, subject to start-finish, start-start, finish-start and deadline constraints is as follows: given $a_{ij}$, $b_{ij}$, $c_{ij}$ and $f_{i}$, determine the unknowns $x_{i}$ and $y_{i}$, that 
\begin{equation}
\begin{aligned}
&
\text{minimize}
&&
\max_{1\leq i\leq n}y_{i}
+
\max_{1\leq i\leq n}(-y_{i}),
\\
&
\text{subject to}
&&
\max_{1\leq j\leq n}(a_{ij}+x_{j})
=
y_{i},
\quad
\max_{1\leq j\leq n}(b_{ij}+x_{j})
\leq
x_{i},
\\
&&&
\max_{1\leq j\leq n}(c_{ij}+y_{j})
\leq
x_{i},
\quad
y_{i}
\leq
f_{i},
\quad
i=1,\ldots,n.
\end{aligned}
\label{P-minyiyi-aijxjyi-bijxjxj-gixi-cijyjxi-yifi}
\end{equation}

The maximization problem is formulated in much the same way:
\begin{equation}
\begin{aligned}
&
\text{maximize}
&&
\max_{1\leq i\leq n}y_{i}
+
\max_{1\leq i\leq n}(-y_{i}),
\\
&
\text{subject to}
&&
\max_{1\leq j\leq n}(a_{ij}+x_{j})
=
y_{i},
\quad
\max_{1\leq j\leq n}(b_{ij}+x_{j})
\leq
x_{i},
\\
&&&
\max_{1\leq j\leq n}(c_{ij}+y_{j})
\leq
x_{i},
\quad
y_{i}
\leq
f_{i},
\quad
i=1,\ldots,n.
\end{aligned}
\label{P-maxyiyi-aijxjyi-bijxjxj-gixi-cijyjxi-yifi}
\end{equation}

\subsection{Algebraic solution to the minimization problem}

Since the formulation of problem \eqref{P-minyiyi-aijxjyi-bijxjxj-gixi-cijyjxi-yifi} involves only operations of maximum, addition and additive inversion, the problem can be well represented in terms of the idempotent semifield $\mathbb{R}_{\max,+}$. We replace the arithmetic operations by those of the semifield $\mathbb{R}_{\max,+}$ to rewrite problem \eqref{P-minyiyi-aijxjyi-bijxjxj-gixi-cijyjxi-yifi} as 
\begin{equation*}
\begin{aligned}
&
\text{minimize}
&&
\bigoplus_{i=1}^{n}y_{i}
\bigoplus_{j=1}^{n}y_{j}^{-1},
\\
&
\text{subject to}
&&
\bigoplus_{j=1}^{n}a_{ij}x_{j}
=
y_{i},
\quad
\bigoplus_{j=1}^{n}b_{ij}x_{j}
\leq
x_{i},
\\
&&&
\bigoplus_{j=1}^{n}c_{ij}y_{j}
\leq
x_{i},
\quad
y_{i}
\leq
f_{i},
\quad
i=1,\ldots,n.
\end{aligned}
\end{equation*}

Furthermore, we introduce the square matrices
\begin{equation*}
\bm{A}
=
(a_{ij}),
\quad
\bm{B}
=
(b_{ij}),
\quad
\bm{C}
=
(c_{ij}),
\end{equation*}
and the vectors
\begin{equation*}
\bm{x}
=
(x_{i}),
\quad
\bm{y}
=
(y_{i}),
\quad
\bm{f}
=
(f_{i}).
\end{equation*}

With the matrix-vector notation, the problem of interest becomes
\begin{equation}
\begin{aligned}
&
\text{minimize}
&&
\bm{1}^{T}\bm{y}\bm{y}^{-}\bm{1},
\\
&
\text{subject to}
&&
\bm{A}\bm{x}
=
\bm{y},
\quad
\bm{B}\bm{x}
\leq
\bm{x},
\\
&&&
\bm{C}\bm{y}
\leq
\bm{x},
\quad
\bm{y}
\leq
\bm{f}.
\end{aligned}
\label{P-min1yy1-Axy-Bxx-Cyx-yf}
\end{equation}

The following result provides a solution to the problem.

\begin{theorem}
\label{T-min1yy1-Axy-Bxx-Cyx-yf}
Let $\bm{A}$ be a regular matrix, $\bm{B}$ and $\bm{C}$ be matrices such that $\mathop\mathrm{Tr}(\bm{B}\oplus\bm{C}\bm{A})\leq\mathbb{1}$, and $\bm{f}$ be a regular vector. Define the matrix $\bm{D}=\bm{A}(\bm{B}\oplus\bm{C}\bm{A})^{\ast}$ with the columns $\bm{d}_{j}=(d_{ij})$, and denote $\Delta=(\bm{D}(\bm{1}^{T}\bm{D})^{-})^{-}\bm{1}$.

Define the sparsified matrix $\widehat{\bm{D}}=(\widehat{d}_{ij})$ with the entries given by the condition 
\begin{equation*}
\widehat{d}_{ij}
=
\begin{cases}
d_{ij},
&
\text{if $d_{ij}\geq\Delta^{-1}\bm{1}^{T}\bm{d}_{j}$};
\\
\mathbb{0},
&
\text{otherwise}.
\end{cases}
\end{equation*}

Let $\bm{S}$ be the matrix, which is formed by putting together the columns of the matrices $\bm{I}\oplus\Delta^{-1}\widehat{\bm{D}}_{1}^{-}\bm{1}\bm{1}^{T}\bm{D}$ for each matrix $\widehat{\bm{D}}_{1}$ that can be obtained from $\widehat{\bm{D}}$ by fixing one nonzero entry in each row and setting the others to zeros. Let $\bm{S}_{0}$ be the matrix obtained from $\bm{S}$ by removing the columns that are linearly dependent on others.

Then, the minimum value in problem \eqref{P-min1yy1-Axy-Bxx-Cyx-yf} is equal to $\Delta=(\bm{D}(\bm{1}^{T}\bm{D})^{-})^{-}\bm{1}$, and all solutions are given by
\begin{equation}
\bm{x}
=
(\bm{B}\oplus\bm{C}\bm{A})^{\ast}\bm{S}_{0}\bm{v},
\quad
\bm{y}
=
\bm{D}\bm{S}_{0}\bm{v},
\quad
\bm{v}
\leq
(\bm{f}^{-}\bm{D}\bm{S}_{0})^{-}.
\label{E-xBCAS0v-yDS0v-vfDS0}
\end{equation}
\end{theorem}
\begin{proof}
We start with the substitution $\bm{y}=\bm{A}\bm{x}$ to eliminate the vector $\bm{y}$ and reduce the problem to finding regular vectors $\bm{x}$ that
\begin{equation*}
\begin{aligned}
&
\text{minimize}
&&
\bm{1}^{T}\bm{A}\bm{x}(\bm{A}\bm{x})^{-}\bm{1},
\\
&
\text{subject to}
&&
\bm{B}\bm{x}
\leq
\bm{x},
\quad
\bm{C}\bm{A}\bm{x}
\leq
\bm{x},
\\
&&&
\bm{A}\bm{x}
\leq
\bm{f}.
\end{aligned}
\end{equation*}

We now combine the first two constraints $\bm{B}\bm{x}\leq\bm{x}$ and $\bm{C}\bm{A}\bm{x}\leq\bm{x}$ into one inequality $(\bm{B}\oplus\bm{C}\bm{A})\bm{x}\leq\bm{x}$, where the matrix satisfies the condition $\mathop\mathrm{Tr}(\bm{B}\oplus\bm{C}\bm{A})\leq\mathbb{1}$. An application of Theorem~\ref{T-Ax-x} to solve this inequality yields the solution $\bm{x}=(\bm{B}\oplus\bm{C}\bm{A})^{\ast}\bm{u}$, where $\bm{u}$ is any regular vector.

We substitute the solution obtained into the objective function and the last inequality constraint, and then use the notation $\bm{D}=\bm{A}(\bm{B}\oplus\bm{C}\bm{A})^{\ast}$. The constraint becomes $\bm{D}\bm{u}\leq\bm{f}$, and has the solution given by Lemma~\ref{L-Ax-d} in the form $\bm{u}\leq(\bm{f}^{-}\bm{D})^{-}$. As a result, we have the problem
\begin{equation}
\begin{aligned}
&
\text{minimize}
&&
\bm{1}^{T}\bm{D}\bm{u}(\bm{D}\bm{u})^{-}\bm{1},
\\
&
\text{subject to}
&&
\bm{u}
\leq
(\bm{f}^{-}\bm{D})^{-}.
\end{aligned}
\label{P-min1DuDu1-ufD}
\end{equation}

To solve problem \eqref{P-min1DuDu1-ufD}, we first ignore the inequality constraint to obtain the solution of the corresponding unconstrained problem, and then reduce the solution to satisfy the constraint. We note that the unconstrained problem has the form of \eqref{P-minqxAxp} with $\bm{q}^{-}$ replaced by $\bm{1}^{T}\bm{D}$, $\bm{A}$ by $\bm{D}$, and $\bm{p}$ by $\bm{1}$, and thus can be solved by Theorem~\ref{T-minqxAxp}.

The solution provided by the theorem defines the minimum in the problem to be $\Delta=(\bm{D}(\bm{1}^{T}\bm{D})^{-})^{-}\bm{1}$, and involves the evaluation of the sparsified matrix $\widehat{\bm{D}}=(\widehat{d}_{ij})$, where $\widehat{d}_{ij}=d_{ij}$ if $d_{ij}\geq\Delta^{-1}\bm{1}^{T}\bm{d}_{j}$, and $\widehat{d}_{ij}=\mathbb{0}$ otherwise.

For each row-regular matrix $\widehat{\bm{D}}_{1}$ that can be obtained from $\widehat{\bm{D}}$ by further replacement of nonzero entries by zeros, we calculate the matrix $\bm{S}_{1}=\bm{I}\oplus\Delta^{-1}\widehat{\bm{D}}_{1}^{-}\bm{1}\bm{1}^{T}\bm{D}$, and then put together the columns of the matrices $\bm{S}_{1}$ to form the matrix $\bm{S}$.

Furthermore, we construct the matrix $\bm{S}_{0}$ by removing those columns from $\bm{S}$, which are linearly dependent on others. Then, by Theorem~\ref{T-minqxAxp}, the solution of the unconstrained problem is given by $\bm{u}=\bm{S}_{0}\bm{v}$, where $\bm{v}$ is any regular vector of appropriate size.

The solution obtained satisfies the condition in problem \eqref{P-min1DuDu1-ufD} if the inequality $\bm{u}=\bm{S}_{0}\bm{v}\leq(\bm{f}^{-}\bm{D})^{-}$ holds. An application of Lemma~\ref{L-Ax-d} yields the solution to this inequality in the form $\bm{v}\leq(\bm{f}^{-}\bm{D}\bm{S}_{0})^{-}$.

Returning to the vectors $\bm{x}$ and $\bm{y}$, we obtain the solution
$$
\bm{x}
=
(\bm{B}\oplus\bm{C}\bm{A})^{\ast}\bm{S}_{0}\bm{v},
\quad
\bm{y}
=
\bm{D}\bm{S}_{0}\bm{v},
\quad
\bm{v}
\leq
(\bm{f}^{-}\bm{D}\bm{S}_{0})^{-},
$$
and thus complete the proof.
%\qed
\end{proof}

\subsection{Illustrative example}

To illustrate the above result and to demonstrate the computation involved, we present the solution of an example problem of low dimension.

\begin{example}
\label{X-min1yy1-Axy-Bxx-Cyx-yf}
Consider a project that involves $n=3$ activities operating under start-finish, start-start, finish-start and finish deadline temporal constraints given by
\begin{gather*}
\bm{A}
=
\left(
\begin{array}{rrc}
3 & -1 & \mathbb{0}
\\
-2 & 2 & 0
\\
-1 & \mathbb{0} & 4
\end{array}
\right),
\quad
\bm{B}
=
\left(
\begin{array}{crr}
\mathbb{0} & \mathbb{0} & -3
\\
2 & \mathbb{0} & 0
\\
1 & -2 & \mathbb{0}
\end{array}
\right),
\\
\bm{C}
=
\left(
\begin{array}{rcr}
\mathbb{0} & \mathbb{0} & \mathbb{0}
\\
0 & \mathbb{0} & -3
\\
-1 & \mathbb{0} & \mathbb{0}
\end{array}
\right),
\quad
\bm{f}
=
\left(
\begin{array}{c}
7
\\
7
\\
7
\end{array}
\right),
\end{gather*}
where we use the symbol $\mathbb{0}=-\infty$ to simplify the writing.

Let $\bm{x}=(x_{1},x_{2},x_{3})^{T}$ and $\bm{y}=(y_{1},y_{2},y_{3})^{T}$ denote the unknown vectors of start and finish times of activities in the project. The problem is to find vectors $\bm{x}$ and $\bm{y}$ that minimize the maximum deviation of finish times, subject to the given temporal constraints.

To apply Theorem~\ref{T-min1yy1-Axy-Bxx-Cyx-yf} to the problem, we have to verify that the conditions of the theorem hold. First, we note that the matrix $\bm{A}$ and the vector $\bm{f}$ are regular. Next, we obtain the matrices
\begin{equation*}
\bm{C}\bm{A}
=
\left(
\begin{array}{crc}
\mathbb{0} & \mathbb{0} & \mathbb{0}
\\
3 & -1 & 1
\\
2 & -2 & \mathbb{0}
\end{array}
\right),
\quad
\bm{B}\oplus\bm{C}\bm{A}
=
\left(
\begin{array}{crr}
\mathbb{0} & \mathbb{0} & -3
\\
3 & -1 & 1
\\
2 & -2 & \mathbb{0}
\end{array}
\right),
\end{equation*}
and then calculate the powers
\begin{equation*}
(\bm{B}\oplus\bm{C}\bm{A})^{2}
=
\left(
\begin{array}{rrr}
-1 & -5 & \mathbb{0}
\\
3 & -1 & 0
\\
1 & -3 & -1
\end{array}
\right),
\quad
(\bm{B}\oplus\bm{C}\bm{A})^{3}
=
\left(
\begin{array}{rrr}
-2 & -6 & -4
\\
2 & -2 & 0
\\
1 & -3 & -2
\end{array}
\right).
\end{equation*}

After evaluating the traces of the powers, we have $\mathop\mathrm{Tr}(\bm{B}\oplus\bm{C}\bm{A})=-1<0=\mathbb{1}$, and thus conclude that all conditions of Theorem~\ref{T-min1yy1-Axy-Bxx-Cyx-yf} are fulfilled.

Furthermore, we obtain the matrices
\begin{gather*}
(\bm{B}\oplus\bm{C}\bm{A})^{\ast}
=
\bm{I}
\oplus
\bm{B}\oplus\bm{C}\bm{A}
\oplus
(\bm{B}\oplus\bm{C}\bm{A})^{2}
=
\left(
\begin{array}{crr}
0 & -5 & -3
\\
3 & 0 & 1
\\
2 & -2 & 0
\end{array}
\right),
\\
\bm{D}
=
\bm{A}(\bm{B}\oplus\bm{C}\bm{A})^{\ast}
=
\left(
\begin{array}{crc}
3 & -1 & 0
\\
5 & 2 & 3
\\
6 & 2 & 4
\end{array}
\right).
\end{gather*}

To find the minimum in the problem, we successively calculate
\begin{equation*}
\bm{1}^{T}\bm{D}
=
\left(
\begin{array}{ccc}
6 & 2 & 4
\end{array}
\right),
\quad
\bm{D}(\bm{1}^{T}\bm{D})^{-}
=
\left(
\begin{array}{r}
-3
\\
0
\\
0
\end{array}
\right),
\quad
\Delta
=
\bm{D}(\bm{1}^{T}\bm{D})^{-}\bm{1}
=
3.
\end{equation*}

The threshold and sparsified matrices for the matrix $\bm{D}$ take the form 
\begin{equation*}
\Delta^{-1}\bm{1}\bm{1}^{T}\bm{D}
=
\left(
\begin{array}{rrc}
3 & -1 & 1
\\
3 & -1 & 1
\\
3 & -1 & 1
\end{array}
\right),
\quad
\widehat{\bm{D}}
=
\left(
\begin{array}{crc}
3 & -1 & \mathbb{0}
\\
5 & 2 & 3
\\
6 & 2 & 4
\end{array}
\right).
\end{equation*}

We now need to construct the matrices, which can be obtained from the matrix $\widehat{\bm{D}}$ by replacing all but one of the non-zero entries in each row by zeros. To reduce the number of matrices to be examined, we follow the backtracking technique described above. First, we fix the entry $\widehat{d}_{11}=3$ and set $\widehat{d}_{12}$ to $\mathbb{0}$. Since the entries in the second and third rows of the first column satisfy the conditions $\widehat{d}_{21}\geq\widehat{d}_{11}$ and $\widehat{d}_{31}\geq\widehat{d}_{11}$, the other entries in these rows can be set to zeros, which gives the matrix with nonzero entries only in the first column. Using the same argument, we obtain another matrix, where only the second column is nonzero. As a result, we have two matrices
$$
\widehat{\bm{D}}_{1}
=
\left(
\begin{array}{ccc}
3 & \mathbb{0} & \mathbb{0}
\\
5 & \mathbb{0} & \mathbb{0}
\\
6 & \mathbb{0} & \mathbb{0}
\end{array}
\right),
\quad
\widehat{\bm{D}}_{2}
=
\left(
\begin{array}{crc}
\mathbb{0} & -1 & \mathbb{0}
\\
\mathbb{0} & 2 & \mathbb{0}
\\
\mathbb{0} & 2 & \mathbb{0}
\end{array}
\right).
$$
 
Furthermore, we take the conjugate transposes
$$
\widehat{\bm{D}}_{1}^{-}
=
\left(
\begin{array}{rrr}
-3 & -5 & -6
\\
\mathbb{0} & \mathbb{0} & \mathbb{0}
\\
\mathbb{0} & \mathbb{0} & \mathbb{0}
\end{array}
\right),
\quad
\widehat{\bm{D}}_{2}^{-}
=
\left(
\begin{array}{crr}
\mathbb{0} & \mathbb{0} & \mathbb{0}
\\
1 & -2 & -2
\\
\mathbb{0} & \mathbb{0} & \mathbb{0}
\end{array}
\right)
$$
to calculate the matrices
\begin{gather*}
\bm{S}_{1}
=
\bm{I}
\oplus
\Delta^{-1}\widehat{\bm{D}}_{1}^{-}\bm{1}\bm{1}^{T}\bm{D}
=
\left(
\begin{array}{crr}
0 & -4 & -2
\\
\mathbb{0} & 0 & \mathbb{0}
\\
\mathbb{0} & \mathbb{0} & 0
\end{array}
\right),
\\
\bm{S}_{2}
=
\bm{I}
\oplus
\Delta^{-1}\widehat{\bm{D}}_{2}^{-}\bm{1}\bm{1}^{T}\bm{D}
=
\left(
\begin{array}{crr}
0 & \mathbb{0} & \mathbb{0}
\\
4 & 0 & 2
\\
\mathbb{0} & \mathbb{0} & 0
\end{array}
\right).
\end{gather*}

Consider the columns of the matrices $\bm{S}_{1}$ and $\bm{S}_{2}$, and denote them as
\begin{gather*}
\bm{s}_{1}
=
\left(
\begin{array}{c}
0
\\
\mathbb{0}
\\
\mathbb{0}
\end{array}
\right),
\quad
\bm{s}_{2}
=
\left(
\begin{array}{c}
\mathbb{0}
\\
0
\\
\mathbb{0}
\end{array}
\right),
\quad
\bm{s}_{3}
=
\left(
\begin{array}{r}
-2
\\
\mathbb{0}
\\
0
\end{array}
\right),
\quad
\bm{s}_{4}
=
\left(
\begin{array}{c}
\mathbb{0}
\\
2
\\
0
\end{array}
\right),
\\
\bm{s}_{5}
=
\left(
\begin{array}{r}
-4
\\
0
\\
\mathbb{0}
\end{array}
\right),
\quad
\bm{s}_{6}
=
\left(
\begin{array}{c}
0
\\
4
\\
\mathbb{0}
\end{array}
\right).
\end{gather*}

Next, we put the columns together to compose the matrix
\begin{equation*}
\bm{S}
=
\left(
\begin{array}{ccrcrc}
0 & \mathbb{0} & -2 & \mathbb{0} & -4 & 0
\\
\mathbb{0} & 0 & \mathbb{0} & 2 & 0 & 4
\\
\mathbb{0} & \mathbb{0} & 0 & 0 & \mathbb{0} & \mathbb{0}
\end{array}
\right).
\end{equation*}

To eliminate the columns, which are linearly dependent on others, we first note that the columns $\bm{s}_{5}$ and $\bm{s}_{6}$ are collinear, and thus remove the last column. Moreover, it is not difficult to verify using Lemma~\ref{E-AbAb1} that the column $\bm{s}_{5}$ is itself dependent on the first four columns. Indeed, we take the columns $\bm{s}_{1}$, $\bm{s}_{2}$, $\bm{s}_{3}$ and $\bm{s}_{4}$ to form the matrix
\begin{equation*}
\bm{S}_{0}
=
\left(
\begin{array}{ccrc}
0 & \mathbb{0} & -2 & \mathbb{0}
\\
\mathbb{0} & 0 & \mathbb{0} & 2
\\
\mathbb{0} & \mathbb{0} & 0 & 0
\end{array}
\right),
\end{equation*}
and then calculate the vectors
\begin{equation*}
(\bm{s}_{5}^{-}\bm{S}_{0})^{-}
=
\left(
\begin{array}{r}
-4
\\
0
\\
-2
\\
-2
\end{array}
\right),
\quad
\bm{S}_{0}(\bm{s}_{5}^{-}\bm{S}_{0})^{-}
=
\left(
\begin{array}{r}
-4
\\
0
\\
-2
\end{array}
\right).
\end{equation*}

Finally, we have $\delta(\bm{S}_{0},\bm{s}_{5})=(\bm{S}_{0}(\bm{s}_{5}^{-}\bm{S}_{0})^{-})^{-}\bm{s}_{5}=0=\mathbb{1}$, which means that $\bm{s}_{5}$ is linear dependent on columns in $\bm{S}_{0}$.

By applying the same verification technique to the columns of the matrix $\bm{S}_{0}$, we conclude that this matrix has no dependent columns.

We are now in a position to represent the solution given by \eqref{E-xBCAS0v-yDS0v-vfDS0}. We start with the evaluation of the matrices
\begin{equation*}
(\bm{B}\oplus\bm{C}\bm{A})^{\ast}\bm{S}_{0}
=
\left(
\begin{array}{crrr}
0 & -5 & -2 & -3
\\
3 & 0 & 1 & 2
\\
2 & -2 & 0 & 0
\end{array}
\right),
\quad
\bm{A}(\bm{B}\oplus\bm{C}\bm{A})^{\ast}\bm{S}_{0}
=
\left(
\begin{array}{crcc}
3 & -1 & 1 & 1
\\
5 & 2 & 3 & 4
\\
6 & 2 & 4 & 4
\end{array}
\right),
\end{equation*}
and then calculate the vector
\begin{equation*}
(\bm{f}^{-}\bm{A}(\bm{B}\oplus\bm{C}\bm{A})^{\ast}\bm{S}_{0})^{-}
=
\left(
\begin{array}{c}
1
\\
5
\\
3
\\
3
\end{array}
\right).
\end{equation*}

Finally, we introduce the vector $\bm{v}=(v_{1},v_{2},v_{3},v_{4})^{T}$, and write the solution in the form
\begin{equation*}
\bm{x}
=
\left(
\begin{array}{crrr}
0 & -5 & -2 & -3
\\
3 & 0 & 1 & 2
\\
2 & -2 & 0 & 0
\end{array}
\right)
\bm{v},
\quad
\bm{y}
=
\left(
\begin{array}{crcc}
3 & -1 & 1 & 1
\\
5 & 2 & 3 & 4
\\
6 & 2 & 4 & 4
\end{array}
\right)
\bm{v},
\quad
\bm{v}
\leq
\left(
\begin{array}{c}
1
\\
5
\\
3
\\
3
\end{array}
\right).
\end{equation*}

Note that the solution can be simplified considering that the third column of the matrix $(\bm{B}\oplus\bm{C}\bm{A})^{\ast}\bm{S}_{0}$ is collinear with the first, and the forth column is with the second. As a result, we can factorize this matrix as
\begin{equation*}
(\bm{B}\oplus\bm{C}\bm{A})^{\ast}\bm{S}_{0}
=
\left(
\begin{array}{cr}
0 & -5
\\
3 & 0
\\
2 & -2
\end{array}
\right)
\left(
\begin{array}{ccrr}
0 & \mathbb{0} & -2 & -3
\\
\mathbb{0} & 0 & 1 & 2
\end{array}
\right),
\end{equation*}
and then replace the vector $\bm{v}$ by new vector $\bm{w}=(w_{1},w_{2})^{T}$ defined as
\begin{equation*}
\bm{w}
=
\left(
\begin{array}{ccrr}
0 & \mathbb{0} & -2 & -3
\\
\mathbb{0} & 0 & 1 & 2
\end{array}
\right)
\bm{v}.
\end{equation*}

Furthermore, we replace
\begin{equation*}
\bm{A}(\bm{B}\oplus\bm{C}\bm{A})^{\ast}\bm{S}_{0}\bm{v}
=
\left(
\begin{array}{rrc}
3 & -1 & \mathbb{0}
\\
-2 & 2 & 0
\\
-1 & \mathbb{0} & 4
\end{array}
\right)
\left(
\begin{array}{cr}
0 & -5
\\
3 & 0
\\
2 & -2
\end{array}
\right)
\bm{w}
=
\left(
\begin{array}{cr}
3 & -1
\\
5 & 2
\\
6 & 2
\end{array}
\right)
\bm{w},
\end{equation*}
and find
\begin{equation*}
\bm{w}
=
\left(
\begin{array}{ccrr}
0 & \mathbb{0} & -2 & -3
\\
\mathbb{0} & 0 & 1 & 2
\end{array}
\right)
\bm{v}
\leq
\left(
\begin{array}{ccrr}
0 & \mathbb{0} & -2 & -3
\\
\mathbb{0} & 0 & 1 & 2
\end{array}
\right)
\left(
\begin{array}{c}
1
\\
5
\\
3
\\
3
\end{array}
\right)
=
\left(
\begin{array}{c}
1
\\
5
\end{array}
\right).
\end{equation*}

After the change of variables, the solution reduces to
\begin{equation*}
\bm{x}
=
\left(
\begin{array}{cr}
0 & -5
\\
3 & 0
\\
2 & -2
\end{array}
\right)
\bm{w},
\quad
\bm{y}
=
\left(
\begin{array}{cr}
3 & -1
\\
5 & 2
\\
6 & 2
\end{array}
\right)
\bm{w},
\quad
\bm{w}
\leq
\left(
\begin{array}{c}
1
\\
5
\end{array}
\right).
\end{equation*}

In the standard notation, the solution takes the form
\begin{equation*}
\begin{aligned}
x_{1}
&=
\max(w_{1},w_{2}-5),
\\
x_{2}
&=
\max(w_{1}+3,w_{2}),
\\
x_{3}
&=
\max(w_{1}+2,w_{2}-2),
\end{aligned}
\quad
\begin{aligned}
y_{1}
&=
\max(w_{1}+3,w_{2}-1),
\\
y_{2}
&=
\max(w_{1}+5,w_{2}+2),
\\
y_{3}
&=
\max(w_{1}+6,w_{2}+2),
\end{aligned}
\end{equation*}
where
\begin{equation*}
w_{1}
\leq
1,
\quad
w_{2}
\leq
5.
\end{equation*}

Specifically, the latest start and finish times, which correspond to $w_{1}=1$ and $w_{2}=5$, are given by
\begin{equation*}
x_{1}
=
1,
\quad
x_{2}
=
5,
\quad
x_{3}
=
3,
\quad
y_{1}
=
4,
\quad
y_{2}
=
7,
\quad
y_{2}
=
7.
\end{equation*}
\end{example}

\subsection{Solution to the maximization problem}

Similarly as above, we represent problem \eqref{P-maxyiyi-aijxjyi-bijxjxj-gixi-cijyjxi-yifi} in terms of the semifield $\mathbb{R}_{\max,+}$ in the form
\begin{equation}
\begin{aligned}
&
\text{maximize}
&&
\bm{1}^{T}\bm{y}\bm{y}^{-}\bm{1},
\\
&
\text{subject to}
&&
\bm{A}\bm{x}
=
\bm{y},
\quad
\bm{B}\bm{x}
\leq
\bm{x},
\\
&&&
\bm{C}\bm{y}
\leq
\bm{x},
\quad
\bm{y}
\leq
\bm{f}.
\end{aligned}
\label{P-max1yy1-Axy-Bxx-Cyx-yf}
\end{equation}

A complete solution of the problem can be described as follows.
\begin{theorem}
\label{T-max1yy1-Axy-Bxx-Cyx-yf}
Let $\bm{A}$ be a regular matrix, $\bm{B}$ and $\bm{C}$ be matrices such that $\mathop\mathrm{Tr}(\bm{B}\oplus\bm{C}\bm{A})\leq\mathbb{1}$ and the matrix $\bm{D}=\bm{A}(\bm{B}\oplus\bm{C}\bm{A})^{\ast}$ has regular columns $\bm{d}_{j}=(d_{ij})$.

Denote by $\bm{D}_{sk}$ the matrix obtained from $\bm{D}$ by fixing the entry $d_{sk}$ for some indices $s$ and $k$, and by replacing the other entries by $\mathbb{0}$, and let $\bm{R}_{sk}=\bm{I}\oplus\bm{D}_{sk}^{-}\bm{D}$.

Then, the maximum value in problem \eqref{P-max1yy1-Axy-Bxx-Cyx-yf} is equal to $\Delta=\bm{1}^{T}\bm{D}\bm{D}^{-}\bm{1}$, and all solutions are given by
\begin{equation*}
\bm{x}
=
(\bm{B}\oplus\bm{C}\bm{A})^{\ast}\bm{R}_{sk}\bm{v},
\quad
\bm{y}
=
\bm{D}\bm{R}_{sk}\bm{v},
\quad
\bm{v}
\leq
(\bm{f}^{-}\bm{D}\bm{R}_{sk})^{-},
\end{equation*}
for all indices $k$ and $s$ defined by the conditions
\begin{equation*}
k
=
\arg\max_{1\leq j\leq m}\bm{1}^{T}\bm{d}_{j}\bm{d}_{j}^{-}\bm{1},
\quad
s
=
\arg\max_{1\leq i\leq n}d_{ik}^{-1}.
\end{equation*}
\end{theorem}
\begin{proof}
In the same way as in Theorem~\ref{T-min1yy1-Axy-Bxx-Cyx-yf}, we denote $\bm{D}=\bm{A}(\bm{B}\oplus\bm{C}\bm{A})^{\ast}$, and represent the unknown vectors as $\bm{x}=(\bm{B}\oplus\bm{C}\bm{A})^{\ast}\bm{u}$ and $\bm{y}=\bm{D}\bm{u}$, where the vector $\bm{u}$ is the solution of the problem 
\begin{equation*}
\begin{aligned}
&
\text{maximize}
&&
\bm{1}^{T}\bm{D}\bm{u}(\bm{D}\bm{u})^{-}\bm{1},
\\
&
\text{subject to}
&&
\bm{u}
\leq
(\bm{f}^{-}\bm{D})^{-}.
\end{aligned}
\end{equation*}

Furthermore, we apply Theorem~\ref{T-maxqxAxp}, where $\bm{q}^{-}$ is replaced by $\bm{1}^{T}\bm{D}$, $\bm{A}$ by $\bm{D}$ and $\bm{p}$ by $\bm{1}$, to solve the last problem without constraints. We obtain
$$
\bm{u}
=
(\bm{I}\oplus\bm{D}_{sk}^{-}\bm{D})\bm{v}
=
\bm{R}_{sk}\bm{v},
\quad
\bm{v}
>
\bm{0},
$$
where the indices $k$ and $s$ are given by the conditions
$$
k
=
\arg\max_{1\leq j\leq m}\bm{1}^{T}\bm{d}_{j}\bm{d}_{j}^{-}\bm{1},
\quad
s
=
\arg\max_{1\leq i\leq n}d_{ik}^{-1}.
$$

Substitution into the inequality constraint and application of Lemma~\ref{L-Ax-d} yields the inequality
$$
\bm{v}
\leq
(\bm{f}^{-}\bm{D}\bm{R}_{sk})^{-}.
$$

Finally, we represent the solution of the original problem in the form
$$
\bm{x}
=
(\bm{B}\oplus\bm{C}\bm{A})^{\ast}\bm{R}_{sk}\bm{v},
\quad
\bm{y}
=
\bm{D}\bm{R}_{sk}\bm{v},
$$
which gives the desired result.
\end{proof}

\begin{example}
\label{X-max1yy1-Axy-Bxx-Cyx-yf}

Let us consider the project described in Example~\ref{X-min1yy1-Axy-Bxx-Cyx-yf}, and apply Theorem~\ref{T-max1yy1-Axy-Bxx-Cyx-yf} to find a schedule according to the maximization objective.

We take the previously obtained matrices
$$
(\bm{B}\oplus\bm{C}\bm{A})^{\ast}
=
\left(
\begin{array}{crr}
0 & -5 & -3
\\
3 & 0 & 1
\\
2 & -2 & 0
\end{array}
\right),
\quad
\bm{D}
=
\left(
\begin{array}{crc}
3 & -1 & 0
\\
5 & 2 & 3
\\
6 & 2 & 4
\end{array}
\right),
$$
and note that the matrix $\bm{D}$ has only regular columns. Then, we calculate
$$
\bm{1}^{T}\bm{d}_{1}\bm{d}_{1}^{-}\bm{1}
=
3,
\quad
\bm{1}^{T}\bm{d}_{2}\bm{d}_{2}^{-}\bm{1}
=
3,
\quad
\bm{1}^{T}\bm{d}_{3}\bm{d}_{3}^{-}\bm{1}
=
4,
$$
which yields $k=3$. In addition, we have $\Delta=\bm{1}^{T}\bm{D}\bm{D}^{-}\bm{1}=4$.

Considering that $d_{13}^{-1}=0$, $d_{23}^{-1}=-3$ and $d_{33}^{-1}=-4$, we fix $s=1$.

The application of Theorem~~\ref{T-max1yy1-Axy-Bxx-Cyx-yf} requires the calculation of the matrices
$$
\bm{D}_{sk}
=
\left(
\begin{array}{ccc}
\mathbb{0} & \mathbb{0} & 0
\\
\mathbb{0} & \mathbb{0} & \mathbb{0}
\\
\mathbb{0} & \mathbb{0} & \mathbb{0}
\end{array}
\right),
\quad
\bm{R}_{sk}
=
\bm{I}\oplus\bm{D}_{sk}^{-}\bm{D}
=
\left(
\begin{array}{crc}
0 & \mathbb{0} & \mathbb{0}
\\
\mathbb{0} & 0 & \mathbb{0}
\\
3 & -1 & 0
\end{array}
\right).
\quad
$$

Furthermore, we compute and factorize the matrices
\begin{gather*}
(\bm{B}\oplus\bm{C}\bm{A})^{\ast}\bm{R}_{sk}
=
\left(
\begin{array}{crr}
0 & -4 & -3
\\
4 & 0 & 1
\\
3 & -1 & 0
\end{array}
\right)
=
\left(
\begin{array}{c}
0
\\
4
\\
3
\end{array}
\right)
\left(
\begin{array}{crr}
0 & -4 & -3
\end{array}
\right),
\\
\bm{D}\bm{R}_{sk}
=
\left(
\begin{array}{crc}
3 & -1 & 0
\\
6 & 2 & 3
\\
7 & 3 & 4
\end{array}
\right)
=
\left(
\begin{array}{c}
3
\\
6
\\
7
\end{array}
\right)
\left(
\begin{array}{crc}
0 & -4 & -3
\end{array}
\right),
\end{gather*}
and then find the vector
$$
(\bm{f}^{-}\bm{D}\bm{R}_{sk})^{-}
=
\left(
\begin{array}{c}
0
\\
4
\\
3
\end{array}
\right).
$$

Finally, with the vector $\bm{v}=(v_{1},v_{2},v_{3})^{T}$, the solution is written as
$$
\bm{x}
=
\left(
\begin{array}{crr}
0 & -4 & -3
\\
4 & 0 & 1
\\
3 & -1 & 0
\end{array}
\right)
\bm{v},
\quad
\bm{y}
=
\left(
\begin{array}{crc}
3 & -1 & 0
\\
6 & 2 & 3
\\
7 & 3 & 4
\end{array}
\right)
\bm{v},
\quad
\bm{v}
\leq
\left(
\begin{array}{c}
0
\\
4
\\
3
\end{array}
\right).
$$

Assume a scalar $w$ to satisfy the equality
$$
w
=
\left(
\begin{array}{crr}
0 & -4 & -3
\end{array}
\right)
\bm{v},
$$
and note that
$$
w
=
\left(
\begin{array}{crr}
0 & -4 & -3
\end{array}
\right)
\bm{v}
\leq
\left(
\begin{array}{crr}
0 & -4 & -3
\end{array}
\right)
\left(
\begin{array}{c}
0
\\
4
\\
3
\end{array}
\right)
=
0.
$$

We now turn from the vector $\bm{v}$ to the scalar $w$ to represent the solution in a more compact form as
$$
\bm{x}
=
\left(
\begin{array}{c}
0
\\
4
\\
3
\end{array}
\right)
w,
\quad
\bm{y}
=
\left(
\begin{array}{c}
3
\\
6
\\
7
\end{array}
\right)
w,
\quad
w
\leq
0.
$$

In terms of the conventional algebra, the solution becomes
$$
x_{1}
=
w,
\quad
x_{2}
=
w+4,
\quad
x_{3}
=
w+3,
\quad
y_{1}
=
w+3,
\quad
y_{2}
=
w+6,
\quad
y_{3}
=
w+7.
$$

The latest start and finish times are given by setting $w=0$ in the form
\begin{equation*}
x_{1}
=
0,
\quad
x_{2}
=
4,
\quad
x_{3}
=
3,
\quad
y_{1}
=
3,
\quad
y_{2}
=
6,
\quad
y_{2}
=
7.
\end{equation*}
\end{example}

\section{Conclusions}

In many tropical optimization problems encountered in real-world applications, it is not too difficult to obtain a particular solution in an explicit form, whereas finding all solutions may be a hard problem. This paper was concerned with multidimensional optimization problems that arise in various applications as the problems of minimizing and maximizing the span seminorm. To obtain a complete solution of the minimization problem, we first characterized all solutions by a system of simultaneous vector equation and inequality, and then developed a new matrix sparsification technique. This technique was applied to describe all solutions in an explicit vector form. As another use of sparsified matrices, we derived a compact vector representation for complete solution of the maximization problem. The results obtained were applied to find a complete solution to a real-world scheduling problem.

The extension of the characterization of solutions and sparsification technique proposed in the paper to other tropical optimization problems may present important directions for future work. New applications of the results to solve real-world problems in various fields, including location analysis and decision making, are of particular interest. The connection between tropical optimization and relational algebra can be another line of future research.

\section*{Acknowledgments}
This work was supported in part by the Russian Foundation for Humanities (grant number 16-02-00059). The author is very grateful to three referees for their valuable comments and suggestions, which have been incorporated into the revised version of the manuscript.

\bibliographystyle{utphys}

\bibliography{Algebraic_solution_of_tropical_optimization_problems}

\end{document}